\newtheorem{theorem}{Theorem}[section]
\newtheorem{lemma}[theorem]{Lemma}
\newtheorem{proposition}[theorem]{Proposition}
\newtheorem{corollary}[theorem]{Corollary}
\theoremstyle{definition}
\newtheorem{definition}[theorem]{Definition}
\numberwithin{equation}{section}
\newcommand{\vp}{\varphi}
\newcommand{\clb}{\mathcal{B}}
\newcommand{\clf}{\mathcal{F}}
\newcommand{\clh}{\mathcal{H}}
\newcommand{\clj}{\mathcal{J}}
\newcommand{\clk}{\mathcal{K}}
\newcommand{\cll}{\mathcal{L}}
\newcommand{\clm}{\mathcal{M}}
\newcommand{\cls}{\mathcal{S}}
\newcommand{\clw}{\mathcal{W}}
\newcommand{\D}{\mathbb{D}}
\newcommand{\T}{\mathbb{T}}
\newcommand{\raro}{\rightarrow}
\newcommand{\C}{\mathbb{C}}
\newcommand{\hdcc}{{H}^2}
\newcommand{\frs}{\mathcal{T}_{[\vp; u,v]}}
\newcommand{\sfrs}{\mathcal{T}_{[\vp, u]}}
\begin{document}

\setcounter{page}{1}

\title[Invariant subspaces of perturbed backward shift]{Invariant subspaces of perturbed backward shift}

\author[Das]{Soma Das}
\address{Indian Statistical Institute, Statistics and Mathematics Unit, 8th Mile, Mysore Road, Bangalore, 560059, India}
\email{dsoma994@gmail.com, somadas\_ra@isibang.ac.in}

\author[Sarkar]{Jaydeb Sarkar}
\address{Indian Statistical Institute, Statistics and Mathematics Unit, 8th Mile, Mysore Road, Bangalore, 560059, India}

\email{jay@isibang.ac.in, jaydeb@gmail.com}

\subjclass[2020]{30H10, 47A15, 47B35, 46E22, 45E10, 81Q15}

\keywords{Hardy space, invariant subspaces, shift operator, nearly invariant subspaces, model spaces, almost invariant subspaces}


\begin{abstract}
We represent closed subspaces of the Hardy space that are invariant under finite-rank perturbations of the backward shift. We apply this to classify almost invariant subspaces of the backward shift and represent a more refined version of nearly invariant subspaces. Kernels of certain perturbed Toeplitz operators are examples of the newly introduced nearly invariant subspaces. 
\end{abstract}

\maketitle

\tableofcontents

\section{Introduction}\label{intro}

The famous ``invariant subspace problem'' calls for finding a nontrivial closed subspace that is invariant under a bounded linear operator on a Hilbert space. This problem is still open. It is even worse that we do not know how to solve this problem for a general rank-one perturbation of a diagonal operator \cite{Gallardo}. In the same vein, the lattice of invariant subspaces of a simple composition operator is unknown, and the issue is inextricably linked to the original invariant subspace problem \cite{Nord1}. Thus, the description of the lattices of invariant subspaces of common operators, or even the mere existence of invariant subspaces, is recognized as a complex problem. In particular, it is known to be a difficult challenge to find the lattice of invariant subspaces of finite-rank perturbations of well-known operators. In this paper, we will address this issue in relation to the well-known backward shift operator.

More specifically, in this paper, we bring together three problems concerning invariant subspaces of operators related to the backward shift operator $T_z^*$ acting on the Hardy space $H^2$. Recall the standard notation that $T_z$ is the multiplication operator by the coordinate function $z$; alternatively, the Toeplitz operator with the symbol $z$; and $H^2$ is the Hilbert space of all square summable analytic functions on the open unit disc $\D = \{z \in \C: |z| < 1\}$ (see \eqref{eqn: Hardy} for more details). Therefore
\[
T_z f = zf,
\]
for all $f \in H^2$. Strategically, we first represent closed invariant subspaces of finite-rank perturbations of $T_z^*$, and then we use that technique to classify almost invariant subspaces for $T_z^*$ and represent nearly invariant subspaces of $T_z^*$ (in a larger sense).

Before moving on, it is worth remarking that the results and approach of this paper are an enhancement of the theory that was presented by Hayashi \cite{Hayashi} and Hitt \cite{Hitt}, which was further fine-tuned by Sarason \cite{Sarason}. In fact, the idea we put forward for full-length parametrizations of invariant subspaces for all finite-rank perturbations of $T_z^*$ has been spurred by Sarason's insight regarding the invariance property of a particular rank-one perturbation of $T_z^*$ \cite[page 482]{Sarason} (also see \eqref{eqn: intro Tz M} for more details). We have subsequently laid out the road map to solve the other two problems.

To follow the progression of ideas, we begin with perturbations of $T_z^*$. Each pair of nonzero vectors $u$ and $v$ in $H^2$ yields the rank-one operator $u \otimes v$, where
\[
(u \otimes v)f = \langle f, v \rangle u,
\]
for all $f \in H^2$. The term rank-one is used to describe operators whose ranges are one-dimensional. Similarly, rank-$m$ operators are those whose ranges are $m$-dimensional. They can be represented as:
\[
\sum_{i=1}^{m} v_i \otimes u_i,
\]
where $\{u_i\}_{i=1}^m$ and $\{v_i\}_{i=1}^m$ are orthonormal sets of functions in $ H^2$. For the remainder of this section, we fix these orthonormal sets. We are interested in the invariant subspaces of a general finite-rank perturbation
\[
X = T_z^* + \sum_{i=1}^{m} v_i \otimes u_i.
\]
In Corollary \ref{cor: T_z per inv sub}, we present a complete description of invariant subspaces of $X$:

\begin{theorem}\label{thm: Intro 1}
Let $\clm$ be a closed subspace of $H^2$. If $\clm$ is invariant under $X$, then there exist a natural number $p' \leq p+1$ and an inner function $\Psi \in H^\infty_{\clb(\C^{p'}, \C^{p+1})}$ such that
\[
\clm = [F_0,1]_{1\times (p+1)} \clk_\Psi,
\]
where $\clk_\Psi = H^2_{\C^{p+1}}/\Psi H^2_{\C^{p'}}$ is the model space, $F_0 = [f_1, \ldots, f_p]$, and $\{f_i\}_{i=1}^p$ is an orthonormal basis for $\text{span}\{P_\clm u_1, \ldots ,P_\clm u_m\}$. Moreover
\[
\mathcal{K}_\Psi = \{(F,f_0)\in H^2_{\C^p} \oplus H^2 : F_0 F + f_0 \in \clm \textit{ and } \|F_0 F + f_0\|^2 = \|F\|^2 +\|f_0\|^2\}.
\]
\end{theorem}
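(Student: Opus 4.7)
\emph{Plan.} The strategy is to show $\clm$ is nearly $T_z^*$-invariant with defect space $\operatorname{span}\{f_1,\ldots,f_p\}$, construct an isometric embedding $V\colon\clm\to H^2_{\C^{p+1}}$, and apply the vector-valued Beurling--Lax theorem to its image. For $f\in\clm$ orthogonal to each $f_k$, one has
\[
\langle f,u_j\rangle=\langle P_{\clm}f,u_j\rangle=\langle f,P_{\clm}u_j\rangle=0
\]
since $P_{\clm}u_j\in\operatorname{span}\{f_i\}$. Hence $Xf=T_z^*f$, and $X$-invariance of $\clm$ yields $T_z^*f\in\clm$, so $\clm$ is nearly $T_z^*$-invariant with defect $\clf:=\operatorname{span}\{f_1,\ldots,f_p\}$ of dimension $p$.

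Next, for each $f\in\clm$ set $h_0:=f$ and define recursively $\alpha_n^{(i)}:=\langle h_n,f_i\rangle$, $g_n:=h_n-\sum_i\alpha_n^{(i)}f_i$, and $h_{n+1}:=T_z^*g_n$. By near invariance, $g_n\in\clm\ominus\clf$ and $h_{n+1}\in\clm$; orthogonality gives
\[
\|h_n\|^2=\|h_{n+1}\|^2+|g_n(0)|^2+\sum_i|\alpha_n^{(i)}|^2.
\]
Telescoping bounds $\sum_n|g_n(0)|^2$ and $\sum_n|\alpha_n^{(i)}|^2$ by $\|f\|^2$, so $F^{(i)}(z):=\sum_n\alpha_n^{(i)}z^n$ and $f_0(z):=\sum_n g_n(0)z^n$ lie in $H^2$. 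With $F=(F^{(1)},\ldots,F^{(p)})$, iterating the identity $h_n=zh_{n+1}+g_n(0)+\sum_i\alpha_n^{(i)}f_i$ and using the reproducing-kernel bound $|z^Nh_N(z)|\leq|z|^N\|h_N\|\,\|k_z\|\to 0$ pointwise on $\D$ recovers $f=F_0F+f_0$ in $H^2$, while the norm identity $\|f\|^2=\|F\|^2+\|f_0\|^2$ reduces to $\lim_N\|h_N\|=0$.

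Define $Vf:=(F,f_0)$, an isometry $\clm\to H^2_{\C^{p+1}}$, and let $\clk:=V(\clm)$. For $V(f)=(F,f_0)$, an index shift in the recursion shows $(T_z^*F,T_z^*f_0)=V(h_1)=V(T_z^*g_0)$, where $g_0=f-P_{\clf}f\in\clm\ominus\clf$; since $T_z^*g_0\in\clm$, the pair lies in $\clk$, so $\clk$ is $S^*$-invariant (and closed by isometry of $V$). The vector-valued Beurling--Lax theorem applied to $\clk^\perp$ produces $p'\leq p+1$ and an inner $\Psi\in H^\infty_{\clb(\C^{p'},\C^{p+1})}$ with $\clk=H^2_{\C^{p+1}}\ominus\Psi H^2_{\C^{p'}}=\clk_\Psi$. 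Combining $[F_0,1]V(f)=f$ with surjectivity of $V$ onto $\clk_\Psi$ yields $\clm=[F_0,1]\clk_\Psi$; the description of $\clk_\Psi$ as the set in the moreover clause follows from the graph characterization of $V$.

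The main obstacle is proving $\lim_N\|h_N\|=0$. Weak convergence $h_N\rightharpoonup 0$ is immediate from the pointwise decay, but upgrading to norm convergence requires combining the finite-dimensional defect structure with the exact orthogonal splitting at every step: any nonzero weak limit would leave an unaccounted orthogonal contribution in $f$ incompatible with the iterated Pythagorean identity and with the recursive recovery $f=F_0F+f_0$.
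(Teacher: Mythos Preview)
Your approach is essentially the same as the paper's: iterative orthogonal decomposition of $f\in\clm$, construction of $F$ and $f_0$ as power series, definition of the closed $S^*$-invariant subspace $\clk\subseteq H^2_{\C^{p+1}}$, and an appeal to Beurling--Lax. Your $h_n$ is the paper's $l_n$, your $\alpha_n^{(i)}$ are the entries of $A_n$, and your $g_n(0)$ is $P_{\clk_z}g_{n+1}$.

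The genuine gap is exactly the one you flag: you do not prove $\|h_N\|\to 0$. Your last paragraph is not an argument. You already have $h_N\rightharpoonup 0$ and $\|h_N\|\searrow c$; the question is whether $c=0$, and nothing you wrote rules out $c>0$. The sentence about an ``unaccounted orthogonal contribution'' is circular: if $c>0$ you would still get $f=F_0F+f_0$ pointwise (hence in $H^2$, once you know $F_0F\in H^2$), but with $\|F\|^2+\|f_0\|^2=\|f\|^2-c<\|f\|^2$, so $V$ would fail to be an isometry and the rest of the proof collapses. You cannot deduce $c=0$ from the desired conclusion.

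The paper closes this gap with an external lemma of Benhida--Timotin: if $T\in C_{\cdot 0}$ with $\dim\operatorname{ran}(I-T^*T)<\infty$ and $\clf$ is finite-codimensional, then $TP_\clf\in C_{\cdot 0}$. Applied with $T=T_z$ (so $T_z^{*n}\to 0$ in SOT and $I-T_z^*T_z=0$) and $\clf=\clf^\perp$ in your notation, this gives $(P_{\clf^\perp}T_z^*)^n\to 0$ in SOT; since $h_{N}=(T_z^*P_{\clf^\perp})^{N}f=T_z^*(P_{\clf^\perp}T_z^*)^{N-1}P_{\clf^\perp}f$, the norm convergence $\|h_N\|\to 0$ follows. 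This is the missing ingredient; once you insert it, your proof and the paper's coincide.
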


In the given context, $H^2_{\C^p}$ represents the $\C^p$-valued Hardy space, which may also be seen as $p$-copies of $H^2$. Given Hilbert spaces $\clh$ and $\clk$, we denote $\clb(\clh, \clk)$ (and $\clb(\clh)$ if $\clk = \clh$) as the space of all bounded linear operators from $\clh$ to $\clk$. The symbol $H^\infty_{\clb(\C^{p'}, \C^{p+1})}$ denotes the set of $\clb(\C^{p'}, \C^{p+1})$-valued bounded analytic functions on $\D$, where a function $\Phi \in H^\infty_{\clb(\C^{p'}, \C^{p+1})}$ is \textit{inner} if
\[
\Phi(z)^* \Phi(z) = I_{\C^{p'}},
\]
for a.e. $z \in \T$. Moreover
\[
[F_0,1]_{1\times (p+1)} = [f_1, \ldots, f_p, 1],
\]
is a row vector, and $\clk_\Psi$'s elements are columns.

In fact, we prove Theorem \ref{thm: Intro 1} in a much higher generality (see Theorem \ref{Main_th_N}): we simply replace the backward shift $T_z^*$ by $T_\vp^*$, the adjoint of a Toeplitz operator, where the symbol $\vp \in H^\infty$ is an inner function vanishing at the origin. 

Before proceeding, we need the definition of nearly $T_z^*$-invariant subspaces \cite{Sarason}. A closed subspace $\clm$ of $H^2$ is \textit{nearly $T_z^*$-invariant} if
\[
T_z^* f \in \clm,
\]
for all $f \in \clm \cap z H^2$. Now, we will provide a few comments regarding Theorem \ref{thm: Intro 1}. First, this result deals with linear perturbations, a theory in and of itself that is well-known to be challenging (see the classic \cite{Kato}). Second, this parameterizes all of the invariant subspaces of finite-rank perturbations of $T_z^*$, which was known to be a challenging goal. As far as we know, this is the first result in this direction. Third, we provide this result in the tradition of Hayashi, Hitt, and Sarason's classical argumentation. Indeed, Sarason showed \cite[page 488]{Sarason} that if $\clm$ is a nearly $T_z^*$-invariant subspace then there is a unique function
\begin{equation}\label{eqn: intro g}
g \in \clm \ominus (\clm \cap z H^2),
\end{equation}
such that $\clm$ is invariant under the rank-one perturbation $T_z^* - T_z^* g \otimes g$, that is
\begin{equation}\label{eqn: intro Tz M}
(T_z^* - T_z^* g \otimes g) \clm \subseteq \clm.
\end{equation}
Sarason's observation partly motivated us to attempt full-length representations of invariant subspaces of finite-rank perturbations, leading to the formation of Theorem \ref{thm: Intro 1}. At the very start of this section, this point was briefly mentioned. Finally, we recall Hitt's theory \cite{Hitt, Sarason} on almost $T_z^*$-invariant subspaces. Every nearly $T_z^*$-invariant subspace $\clm \subseteq H^2$ admits the representation
\[
\clm = g H^2,
\]
where $g$ is as in \eqref{eqn: intro g}. Undoubtedly, the result in Theorem \ref{thm: Intro 1} is in line with the same fundamentals. As previously said, in the present case, we have partially enacted the core ideas that were put forward by Hayashi, Hitt, and Sarason a few decades ago.

Now we turn to almost invariant subspaces for $T_z^*$. In the setting of the backward shift operator, this was introduced by Chalendar, Gallardo-Guti\'{e}rrez, and Partington \cite{CGP}. Let $T \in \clb(\clh)$. A closed subspace $\clm \subseteq \clh$ is \textit{almost invariant} for $T$ if there exists a finite-dimensional subspace $\clf\subseteq \clh$ such that
\[
T \clm \subseteq \clm \oplus \clf.
\]
In this case, the \textit{defect} of the space $\clm$ is the lowest possible dimension of such a space $\clf$. A special version of Theorem \ref{th-almost} suggests a link between almost invariant subspaces for $T_z^*$ and invariant subspaces of finite-rank perturbations of $T_z^*$:

\begin{theorem}
Let $\clm$ be a closed subspace of $H^2$. If $\clm$ is invariant under
\[
T_z^* - \sum_{i=1}^{m} v_i \otimes u_i,
\]
then $\clm$ is almost invariant under $T_z^*$ with defect at most $m$. Conversely, if $\clm $ is almost invariant under $T_z^*$ with defect $m$, then $\clm$ is invariant under the rank-$t$ perturbation
\[
T_z^* - \sum_{i=1}^{m} f_i \otimes T_z f_i,
\]
where $\{f_i:1\leq i\leq m\}$ is an orthonormal basis of the $m$ dimensional defect space and $t \leq m$.
\end{theorem}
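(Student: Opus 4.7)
The plan is to prove both implications by direct computation with the definitions of almost invariance and of the rank-one operators $u \otimes v$, without invoking the full parametrization of Theorem \ref{thm: Intro 1}.

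For the forward direction, assume $\clm$ is invariant under $X = T_z^* - \sum_{i=1}^m v_i \otimes u_i$. For any $f \in \clm$, the identity
\[
T_z^* f = Xf + \sum_{i=1}^m \langle f, u_i \rangle v_i
\]
exhibits $T_z^* f$ as an element of $\clm + \operatorname{span}\{v_1,\ldots,v_m\}$, since $Xf \in \clm$ by hypothesis. Setting $\clf = \operatorname{span}\{v_1,\ldots,v_m\}$, a subspace of dimension at most $m$, immediately yields almost invariance with defect at most $m$.

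For the converse, let $\clm$ be almost invariant with defect exactly $m$, and pick an $m$-dimensional witness $\clf$ with $T_z^* \clm \subseteq \clm + \clf$. The first step is to argue that $\clf$ may be taken orthogonal to $\clm$: replacing $\clf$ by $P_{\clm^\perp}\clf$ preserves the containment $T_z^* \clm \subseteq \clm + P_{\clm^\perp}\clf$ and cannot increase the dimension, so by minimality of $m$ this replacement still has dimension $m$. Let $\{f_i\}_{i=1}^m$ be an orthonormal basis of this (now orthogonal) defect space. For $f \in \clm$, the orthogonal decomposition gives
\[
T_z^* f = P_\clm T_z^* f + P_\clf T_z^* f = P_\clm T_z^* f + \sum_{i=1}^m \langle T_z^* f, f_i \rangle f_i,
\]
and the adjoint relation $\langle T_z^* f, f_i \rangle = \langle f, T_z f_i \rangle$ rewrites the sum so that
\[
\Bigl(T_z^* - \sum_{i=1}^m f_i \otimes T_z f_i\Bigr) f = P_\clm T_z^* f \in \clm,
\]
which is the claimed invariance. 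The rank of the perturbation is at most $m$ because its range lies in $\operatorname{span}\{f_1,\ldots,f_m\}$, yielding the bound $t \leq m$.

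The main obstacle, though conceptually modest, is the orthogonality reduction for $\clf$: one must verify carefully that projecting onto $\clm^\perp$ preserves the defect count and still witnesses almost invariance. Once that is settled, the specific form $\sum_{i=1}^m f_i \otimes T_z f_i$ of the perturbation is forced on us by the adjoint relation $T_z^{**}=T_z$, so nothing further needs to be chosen.
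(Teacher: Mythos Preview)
Your proof is correct and follows essentially the same route as the paper's own argument (Theorem~\ref{th-almost}): both directions are handled by the same direct computations you give. The one place you are slightly more careful than the paper is the orthogonality reduction for $\clf$; the paper writes $Tf = f_\clm \oplus f_\clf$ and uses $\langle f_\clm, f_i\rangle = 0$ without comment, effectively assuming $\clf \perp \clm$ from the outset, whereas you justify this explicitly via the projection $P_{\clm^\perp}\clf$ and the minimality of the defect. That extra paragraph is a genuine clarification but not a different method.
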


In particular, this and Theorem \ref{thm: Intro 1} yield a complete characterization of almost invariant subspaces for $T_z^*$ (see Corollary \ref{al-crlre}):

\begin{theorem}
Let $\clm \subseteq H^2$ be a closed subspace. If $\clm$ is almost invariant under $T_z^*$, then there exist a natural number $p' \leq p+1$ and an inner function $\Psi \in H^\infty_{\clb(\C^{p'}, \C^{p+1})}$ such that
\begin{equation}\label{eqn: intro IS rep 1A}
\clm = [F_0,1]_{1\times (p+1)} \clk_\Psi,
\end{equation}
where $F_0= [f_1, \ldots, f_p]$, $\{f_i\}_{i=1}^p$ is an orthonormal basis for $\text{span}\{P_\clm T_z u_i: i=1, \ldots, m\}$, and $\{u_i\}_{i=1}^m$ is an orthonormal basis for the defect space. Moreover
\begin{equation}\label{eqn: intro IS rep 2A}
\mathcal{K}_\Psi = \{(F, f_0)\in H^2_{\C^p} \oplus H^2 : F_0 F + f_0 \in \clm \textit{ and } \|F_0 F + f_0\|^2 = \|{F}\|^2 +\|{f_0}\|^2\},
\end{equation}
and
\begin{equation}\label{eqn: intro IS rep 3A}
\mathcal{J}(k_1, \ldots ,k_{p+1})= f_1k_1+ \cdots +f_pk_p +k_{p+1},
\end{equation}
for all $(k_1, \ldots ,k_{p+1}) \in \clk_\Psi$, defines a unitary operator $\mathcal{J} : \clk_\Psi \to \clm$. Conversely, if $\clm$ has the representation \eqref{eqn: intro IS rep 1A} for some orthonormal set of functions $\{f_i\}_{i=1}^p$ in $\clm$, an inner function $\Psi \in H^\infty_{\clb(\C^{p^\prime},\C^{p+1})}$ with $\clk_\Psi$ as in \eqref{eqn: intro IS rep 2A}, and a unitary $\clj$ as in \eqref{eqn: intro IS rep 3A}, then $\clm$ is almost invariant under $T_z^*$.
\end{theorem}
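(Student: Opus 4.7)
The plan is to deduce this theorem by combining Theorem \ref{thm: Intro 1} with the preceding theorem linking almost invariance under $T_z^*$ to invariance under a specific finite-rank perturbation.

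For the forward direction, suppose $\clm$ is almost invariant under $T_z^*$ with $m$-dimensional defect space, and let $\{u_i\}_{i=1}^m$ be an orthonormal basis of that defect space. The preceding theorem yields
\[
\Bigl(T_z^* - \sum_{i=1}^{m} u_i \otimes T_z u_i\Bigr) \clm \subseteq \clm.
\]
I then apply Theorem \ref{thm: Intro 1} to $X = T_z^* + \sum_{i=1}^m (-u_i) \otimes (T_z u_i)$, identifying the ``$u_i$'' of that statement with $T_z u_i$ and the ``$v_i$'' with $-u_i$. This directly produces the inner function $\Psi$, the model space $\clk_\Psi$, the row vector $F_0 = [f_1, \ldots, f_p]$ whose entries form an orthonormal basis of $\text{span}\{P_\clm T_z u_1, \ldots, P_\clm T_z u_m\}$, and both the representation $\clm = [F_0, 1]_{1 \times (p+1)} \clk_\Psi$ and the internal description of $\clk_\Psi$ stated in \eqref{eqn: intro IS rep 2A}. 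The unitarity of $\clj$ is then automatic: the norm identity $\|F_0 F + f_0\|^2 = \|F\|^2 + \|f_0\|^2$ built into the definition of $\clk_\Psi$ makes $\clj$ isometric on $\clk_\Psi$, and the representation $\clm = [F_0, 1]_{1 \times (p+1)} \clk_\Psi$ identifies $\clm$ as its range.

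For the converse, assume $\clm$ admits the stated representation with $\clj$ unitary; I verify almost invariance directly. Using the Leibniz-type identity $T_z^*(fg) = f\, T_z^* g + g(0)\, T_z^* f$ on $H^2$, for any $g = f_1 k_1 + \cdots + f_p k_p + k_{p+1} \in \clm$ with $(k_1, \ldots, k_{p+1}) \in \clk_\Psi$, I obtain
\[
T_z^* g = \sum_{i=1}^p f_i\, T_z^* k_i + T_z^* k_{p+1} + \sum_{i=1}^p k_i(0)\, T_z^* f_i.
\]
Since $\clk_\Psi$ is a model space, it is invariant under the componentwise backward shift on $H^2_{\C^{p+1}}$, so $(T_z^* k_1, \ldots, T_z^* k_{p+1}) \in \clk_\Psi$; consequently the first two summands above combine to an element of $[F_0, 1]_{1 \times (p+1)} \clk_\Psi = \clm$. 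The remaining sum lies in $\text{span}\{T_z^* f_1, \ldots, T_z^* f_p\}$, a space of dimension at most $p$. Hence $T_z^* \clm$ is contained in the sum of $\clm$ with this finite-dimensional space, and $\clm$ is almost invariant under $T_z^*$.

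The main obstacle is the bookkeeping in the forward direction: carefully aligning the indexing conventions of Theorem \ref{thm: Intro 1} with the defect-basis interpretation used here, and checking that the orthonormal basis $\{f_i\}$ extracted from Theorem \ref{thm: Intro 1} is indeed a basis of $\text{span}\{P_\clm T_z u_i\}$ as the final statement requires. Once the notational alignment is set, the remainder reduces to identities already encoded in $\clk_\Psi$ and to model-space invariance under the componentwise backward shift.
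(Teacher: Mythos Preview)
Your proposal is correct and follows essentially the same route as the paper: the forward direction is identical (apply Theorem~\ref{th-almost} to pass from almost invariance to invariance under $T_z^*-\sum u_i\otimes T_z u_i$, then invoke Theorem~\ref{thm: Intro 1}/Corollary~\ref{cor: T_z per inv sub}), and your converse is the same computation that underlies the converse of Theorem~\ref{Main-th}, only you read off almost invariance directly from the decomposition $T_z^* g = [F_0,1](T_z^*\otimes I)K + \sum k_i(0)\,T_z^* f_i$ rather than first recording invariance under the Sarason-type perturbation and then appealing to Theorem~\ref{th-almost}. The only cosmetic difference is this packaging of the converse; the substance is the same.
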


It should be noted that Chalendar, Gallardo-Guti\'{e}rrez, and Partington were the first to classify almost invariant subspaces under $T_z^*$ \cite[Corollary 3.4]{CGP}. However, the above result differs significantly from \cite{CGP}, which compares nearly $T_z^*$-invariant subspaces of finite defect indices with almost invariant subspaces. Here, we connect them with finite-rank perturbations of $T_z^*$. This, in particular, yields more concrete representations of the model space $\clk_\Psi$. See the end of Section \ref{sec: almost IS} for more comments.

Now we turn to a refined version of nearly $T_z^*$-invariant subspaces. The motivation for this arises partly from the matching theory of finite-rank perturbations of $T_\vp^*$ as well as the potential of the possible theory of kernels of finite-rank perturbations of Toeplitz operators, which we will elaborate on shortly. We start by setting up the notations. Let $\{z_k\}_{k=1}^n$ be a sequence of points (possibly repeated) in $\D$, and assume that
\[
z_j = 0,
\]
for some $j=1, \ldots, n$. Define the corresponding Blaschke product $B_n$ by
\[
B_n(z)=\prod_{k=1}^n\dfrac{z_k -z}{1-\bar{z}_k z},
\]
for all $z \in \D$. Clearly, $B_n \in H^\infty$ is an inner function, and
\[
B_n(0) = 0.
\]
We will consider this Blaschke product for the rest of the discussion in this section.

\begin{definition}\label{def: nearly B}
A closed subspace $\clm \subseteq H^2$ is said to be nearly $T^*_{z,B_n}$-invariant if
\[
T_z^* f \in \clm,
\]
for all
\[
f\in \clm \cap B_n H^2.
\]
\end{definition}

Evidently, nearly $T^*_{z,B_1}$-invariant subspaces are precisely nearly $T_z^*$-invariant subspaces. The following provides a complete description of nearly $T^*_{z,B_n}$-invariant subspaces of $H^2$ (for more details, see Theorem \ref{B-inv-thm}):

\begin{theorem}
Let $\clm$ be a nontrivial closed subspace of $H^2$. Then $\clm$ is nearly $T^*_{z,B_n}$-invariant if and only if there exist a natural number $r' \leq r\leq n$ and an inner function $\Theta \in H^\infty_{\clb(\C^{r'},\C^r)}$ with $\Theta(0)=0$ such that
\[
\clm = [g_1,\ldots , g_r]_{1\times r} \clk_\Theta,
\]
where $\{g_i\}_{i=1}^r$ is an orthonormal basis for $\clm \ominus (\clm \cap B_n \hdcc)$, and
\[
\clj(k_1, \ldots ,k_r) = g_1k_1 + \cdots + g_r k_r,
\]
for all $(k_1, \ldots ,k_r) \in \clk_\Theta$, defines a unitary operator $\clj: \clk_\Theta \raro \clm$.
\end{theorem}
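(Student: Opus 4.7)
For this direction, my plan is to start with $f \in \clm \cap B_n H^2$, write $f = \clj(k_1, \ldots, k_r)$ with $(k_1, \ldots, k_r) \in \clk_\Theta$, and use the hypothesis $\Theta(0) = 0$ to deduce $k_i(0) = 0$ for every $i$. Since $\Theta(0) = 0$, the constants $\C^r$ sit inside $\clk_\Theta$, and $\clj$ sends them isometrically onto $\cle := \clm \ominus (\clm \cap B_n H^2)$; the orthogonality $f \perp \cle$ combined with unitarity of $\clj$ then forces $k_i(0) = 0$. Once $k_i \in zH^2$, I will use $T_z^*(g_i k_i) = g_i T_z^* k_i$ to obtain
\[
T_z^* f = \sum_{i=1}^r g_i T_z^* k_i = \clj(T_z^* k_1, \ldots, T_z^* k_r),
\]
which lies in $\clm$ because $\clk_\Theta$ is invariant under the componentwise backward shift.

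\textbf{Necessity --- setup and iterative expansion.} Put $\cle := \clm \ominus (\clm \cap B_n H^2)$. I will first note that the orthogonal projection $P_{K_{B_n}}$ is injective on $\cle$ (any vector in its kernel lies in $\cle \cap (\clm \cap B_n H^2) = \{0\}$), giving $r := \dim \cle \leq \dim K_{B_n} = n$. Fix an ONB $\{g_1, \ldots, g_r\}$ of $\cle$. For $f \in \clm$, I will define recursively $f_0 = f$, $c_i^{(\ell)} := \langle f_\ell, g_i\rangle$, and $f_{\ell+1} := z^{-1}(f_\ell - \sum_i c_i^{(\ell)} g_i)$. The key observation is that $f_\ell - \sum_i c_i^{(\ell)} g_i$ equals the projection of $f_\ell$ onto $\clm \cap B_n H^2$, and so lies in $B_n H^2 \subseteq zH^2$, making the division by $z$ legitimate; nearly $T^*_{z,B_n}$-invariance then forces $f_{\ell+1} \in \clm$. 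Orthogonality of the two summands gives the Pythagorean identity $\|f_\ell\|^2 = \sum_i |c_i^{(\ell)}|^2 + \|f_{\ell+1}\|^2$. Setting $k_i(z) := \sum_{\ell \geq 0} c_i^{(\ell)} z^\ell$, a pointwise analysis on $\D$ based on the standard $H^2$ point-evaluation bound applied to the remainder $z^{N+1} f_{N+1} = f - \sum_{\ell=0}^N z^\ell \sum_i c_i^{(\ell)} g_i$ will yield the representation $f = \sum_i g_i k_i$ in $H^2$.

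\textbf{Main obstacle.} The hard part will be upgrading this to an isometric representation, i.e.\ proving $\|f\|^2 = \sum_i \|k_i\|^2$ (equivalently, $\lim_N \|f_N\| = 0$). This is the genuine multi-dimensional extension of the delicate isometry step in Hitt and Sarason's treatment of the $n=1$ case, and the expected line of attack exploits the wandering-space structure of $\cle$ together with the nearly $T^*_{z,B_n}$-invariance. Once the isometry is available, I will set
\[
\clw := \bigl\{(k_1, \ldots, k_r) \in H^2_{\C^r} : \textstyle\sum_i g_i k_i \in \clm \text{ and } \|\sum_i g_i k_i\|^2 = \sum_i \|k_i\|^2\bigr\},
\]
observe that $\clj|_\clw : \clw \to \clm$ is a surjective isometry by the iterative construction, and check that the iterate $f_1 \in \clm$ corresponds via the iteration to $(T_z^* k_1, \ldots, T_z^* k_r) \in \clw$, exhibiting $\clw$ as invariant under the componentwise backward shift.

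\textbf{Model space structure.} With $\clw^\perp$ thereby $M_z$-invariant in $H^2_{\C^r}$, the Beurling--Lax theorem will produce $r' \leq r$ and an inner function $\Theta \in H^\infty_{\clb(\C^{r'},\C^r)}$ with $\clw^\perp = \Theta H^2_{\C^{r'}}$, so $\clw = \clk_\Theta$. Finally, any $c = (c_1, \ldots, c_r) \in \C^r$ satisfies $\clj(c) = \sum_i c_i g_i \in \cle \subseteq \clm$ with $\|\clj(c)\|^2 = \sum_i |c_i|^2$ by orthonormality of $\{g_i\}$, so $\C^r \subseteq \clw = \clk_\Theta$. Orthogonality of these constants with $\Theta H^2_{\C^{r'}}$ forces $\Theta(0)^* c = 0$ for every $c \in \C^r$, hence $\Theta(0) = 0$, completing the argument.
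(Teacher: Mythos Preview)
Your overall architecture matches the paper's: the sufficiency direction, the iterative decomposition $f_{\ell+1} = T_z^*\,P_{\clm\cap B_nH^2}\,f_\ell$, the definition of $\clw$, the Beurling--Lax step, and the verification $\Theta(0)=0$ via $\C^r\subseteq\clk_\Theta$ are all essentially identical to what the paper does. Your injectivity argument for $r\le n$ is a clean variant of the paper's Lemma~\ref{dim}; note however that you also need $r\ge 1$, which the paper obtains by observing that $\clm\subseteq B_nH^2$ would force $\clm=\{0\}$ via repeated application of nearly $T^*_{z,B_n}$-invariance.

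The genuine gap is exactly where you flag it: you do not prove $\|f_N\|\to 0$, only gesture at ``the wandering-space structure of $\cle$.'' The paper does not treat this as a delicate Hitt--Sarason style step at all; instead it imports the $C_{\cdot 0}$ machinery from its Theorem~\ref{Main_th_N}. Concretely, on $\clm$ one has $P_{\clm\cap B_nH^2}=(I-P_\cle)$, so
\[
f_N = \bigl[T_z^*(I-P_\cle)\bigr]^N f = T_z^*\bigl[(I-P_\cle)T_z^*\bigr]^{N-1}(I-P_\cle)f.
\]
Since $T_z\in C_{\cdot 0}$ with $I-T_z^*T_z=0$, and $\cle$ is finite-dimensional, the Benhida--Timotin lemma (the paper's \eqref{eqn: Benhida}) gives $T_z(I-P_\cle)\in C_{\cdot 0}$, i.e.\ $[(I-P_\cle)T_z^*]^n\to 0$ in SOT, whence $\|f_N\|\to 0$. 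This single lemma is the missing idea; without it (or an equivalent replacement) your iterative scheme yields the pointwise identity $f=\sum_i g_ik_i$ but not the norm identity, and hence neither the unitarity of $\clj$ nor the well-posedness of $\clw$ as a closed subspace.
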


Denote by $L^\infty$ the space of all essentially bounded functions on $\T$. For each $\vp \in L^\infty$, we follow the standard notation of the \textit{Toeplitz operator} as $T_\vp$. Then
\begin{equation}\label{eqn: TO}
T_\vp f = P_{H^2}(\vp f),
\end{equation}
for all $f\in H^2$, where $P_{H^2}$ denotes the orthogonal projection (or, the Szeg\"{o} projection) from $L^2$ (the space of all square-integrable functions on $\T$) onto $H^2$. Kernels of Toeplitz operators are crucial in the context of examples of nearly $T_z^*$-invariant subspaces. We refer to \cite{Partington, LP, Sarason 1} for information regarding the kernels of Toeplitz operators and nearly $T_z^*$-invariant subspaces of $H^2$, and \cite{HS} in the context of $H^p$-subspaces.

In a comparable manner, we provide examples of nearly $T^*_{z,B_n}$-invariant subspaces. Curiously, certain dedicated finite-rank perturbations of Toeplitz operators result in nontrivial examples of $T^*_{z,B_n}$-invariant subspaces. Pick an arbitrary orthonormal basis $\{f_i\}_{i=1}^n$ of $\clk_{B_n}$. In Theorem \ref{thm: finite-rank pert TO}, we prove that
\[
\ker \Big(T_\vp + \sum_{i=1}^{n} f_i \otimes T_z^* f_i\Big),
\]
is nearly $T^*_{z,B_n}$-invariant subspace. We are of the opinion that this space is significant.

We remark that Liang and Partington \cite{LP} also examined finite-rank perturbations of Toeplitz operators, providing examples of subspaces that are nearly $T_z^*$-invariant but have finite defects. They impose restrictions on the Toeplitz operators while leaving the finite-rank operators unrestricted. In this context, we also refer to \cite{Fricain}. However, our perspectives and objectives in this paper are different.

Next, we consider representations of invariant subspaces of finite-rank perturbations of $T_z$. We recall a general fact from elementary functional analysis. Let $\cls$ be a closed subspace of $\clh$ and let $T \in \clb(\clh)$. Then $\cls$ is invariant under $T$ if and only if $\cls^\perp$ is invariant under $T^*$. In particular, representations of $T$-invariant subspaces also yield representations of $T^*$-invariant subspaces. Nevertheless, the explicit feature of the invariant property (if any) may not be maintained when representations are transmitted from one to another, particularly in a concrete situation such as ours. Here, we are talking about the representation of invariant subspaces of finite-rank perturbations $T_z$, as we are already aware of the same for finite-rank perturbations of $T_z^*$.

We identify a class of perturbations of $T_z$ for which one can say more definitively about the lattices of invariant subspaces (see Theorem \ref{thm: IS of Tz pert}):

\begin{theorem}
Let $\clm \subseteq H^2$ be a closed subspace. Suppose $\clm$ is an invariant under $T_z - \sum_{i=1}^{m} v_i \otimes u_i$. Define
\[
\cll: = span\{P_{\clm^\perp} v_1, \ldots, P_{\clm^\perp} v_m\}.
\]
Assume that $\{\vp_i\}_{i=1}^p \subseteq H^\infty$ is an orthonormal basis for $\cll$. Then there exists an inner function $\Psi \in H^\infty_{\clb(\C^{p'}, \C^{p+1})}$ for some $p' \leq p+1$ such that
\[
\clm = \{g\in H^2 : (T_{\bar{\vp}_1} g, \ldots, T_{\bar{\vp}_p} g, g) \in \Psi H^2_{\C^{p^\prime}}\}.
\]
\end{theorem}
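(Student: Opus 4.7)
The plan is to deduce the statement from Theorem \ref{thm: Intro 1} by passing to adjoints. First, I would observe that $(v_i \otimes u_i)^* = u_i \otimes v_i$, so invariance of $\clm$ under $T_z - \sum_{i=1}^{m} v_i \otimes u_i$ is equivalent to invariance of $\clm^\perp$ under the finite-rank perturbation
\[
T_z^* + \sum_{i=1}^{m} (-u_i) \otimes v_i
\]
of the backward shift. Applying Theorem \ref{thm: Intro 1} (or its more general form, Theorem \ref{Main_th_N}) to $\clm^\perp$ with this perturbation, and noting that the role of the right-hand factors (the ``$u_i$'' appearing in Theorem \ref{thm: Intro 1}) is played here by the $v_i$, one obtains a natural number $p' \leq p+1$, an inner function $\Psi \in H^\infty_{\clb(\C^{p'}, \C^{p+1})}$, and an orthonormal basis $\{f_1,\ldots,f_p\}$ of $\text{span}\{P_{\clm^\perp}v_1,\ldots,P_{\clm^\perp}v_m\} = \cll$ such that, with $F_0 = [f_1,\ldots,f_p]$,
\[
\clm^\perp = [F_0,1]\,\clk_\Psi, \qquad \clk_\Psi = H^2_{\C^{p+1}} \ominus \Psi H^2_{\C^{p'}}.
\]
Since $\{\vp_i\}_{i=1}^{p}$ is another orthonormal basis of $\cll$, there exists a constant unitary $U \in \clb(\C^p)$ with $[\vp_1,\ldots,\vp_p] = [f_1,\ldots,f_p]\,U$. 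Left-multiplying $\Psi$ by the constant block unitary $\mathrm{diag}(U^*,1) \in \clb(\C^{p+1})$ produces an inner function of the same type whose model space is unitarily equivalent to $\clk_\Psi$, and at the same time converts the representation above into the analogous one with $F_0$ replaced by $[\vp_1,\ldots,\vp_p]$. I would therefore assume henceforth that $F_0 = [\vp_1,\ldots,\vp_p]$.

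Next, I would compute $\clm = (\clm^\perp)^\perp$ by pairing a generic $g \in H^2$ against a generic element $F_0 F + f_0 \in \clm^\perp$, where $F = (F_1,\ldots,F_p)$ and $(F,f_0) \in \clk_\Psi$. Since each $\vp_i \in H^\infty$ and the $H^2$-adjoint of multiplication by $\vp_i$ is $T_{\bar\vp_i}$, a direct calculation gives
\[
\langle g,\, F_0 F + f_0\rangle_{H^2} = \sum_{i=1}^{p}\langle T_{\bar\vp_i}g,\, F_i\rangle + \langle g,\,f_0\rangle = \bigl\langle (T_{\bar\vp_1}g,\ldots,T_{\bar\vp_p}g,\,g),\,(F,f_0)\bigr\rangle_{H^2_{\C^{p+1}}}.
\]
Therefore $g \in \clm$ if and only if $(T_{\bar\vp_1}g,\ldots,T_{\bar\vp_p}g,\,g)$ is orthogonal to every element of $\clk_\Psi$ inside $H^2_{\C^{p+1}}$, equivalently if and only if it belongs to $\clk_\Psi^\perp \cap H^2_{\C^{p+1}} = \Psi H^2_{\C^{p'}}$. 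This is exactly the description of $\clm$ claimed in the theorem.

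I expect the principal difficulty to be bookkeeping rather than analysis: care is needed in justifying the substitution of the intrinsic orthonormal basis furnished by Theorem \ref{thm: Intro 1} with the preassigned basis $\{\vp_i\}$ while keeping $\Psi$ inner and $\clk_\Psi$ unchanged up to unitary equivalence. Once that is settled, the argument rests on two standard facts --- that $\Psi(z)^*\Psi(z) = I_{\C^{p'}}$ a.e.\ forces $\Psi H^2_{\C^{p'}}$ to be closed with $\clk_\Psi$ as its orthogonal complement in $H^2_{\C^{p+1}}$, and the Toeplitz adjoint identity $\langle g, \vp h\rangle_{H^2} = \langle T_{\bar\vp}g, h\rangle_{H^2}$ for $\vp \in H^\infty$ and $g,h \in H^2$.
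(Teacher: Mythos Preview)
Your proposal is correct and follows essentially the same route as the paper's proof of Theorem \ref{thm: IS of Tz pert}: pass to the adjoint, apply Theorem \ref{Main_th_N} (equivalently Corollary \ref{cor: T_z per inv sub}) to $\clm^\perp$, then compute $\clm = (\clm^\perp)^\perp$ via the pairing identity $\langle g,\vp_i h\rangle = \langle T_{\bar\vp_i}g,h\rangle$. The only difference is cosmetic: the paper applies Theorem \ref{Main_th_N} directly with the preassigned basis $\{\vp_i\}$ (legitimate, since the proof of that theorem begins by fixing an \emph{arbitrary} orthonormal basis of the relevant span), whereas you first take the basis produced by the theorem and then rotate to $\{\vp_i\}$ by a constant unitary $\mathrm{diag}(U^*,1)$---a harmless extra step that leaves $\Psi$ inner and the model space intact.
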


Some of the results stated above are merely one-directional. This is also in keeping with Hitt and Sarason's perspectives on representations of nearly $T_z^*$-invariant subspaces. In some of the instances, we present a converse directional result that is restricted to a particular class of finite-rank perturbations. These perturbations are motivated by Sarason's rank-one perturbation, as pointed out in \eqref{eqn: intro Tz M}. Typically they look like
\begin{equation}\label{eqn: Sarason}
T_z^* - \sum_{i=1}^m T_z^* f_i \otimes f_i, \text{ or } T_z^* - \sum_{i=1}^m f_i \otimes T_z^* f_i.
\end{equation}
We call them \textit{Sarason-type perturbations}. 

The remaining sections of the paper are structured in the following manner: Section \ref{sec: pert of TO} is the heart of the paper, which gives invariant subspaces of finite-rank perturbations of $T_\vp^*$ with inner symbols $\vp$ vanishing at zero. In Section \ref{sec: pert of shift}, we apply the results of the preceding section to the particular situation of the backward shift operator. We additionally take into account the Sarason-type perturbations. Section \ref{sec: pert of T-z} deals with invariant subspaces of certain finite-rank perturbations of $T_z$. In Section \ref{sec: almost IS}, we determine the lattice of almost invariant subspaces of $T_z^*$. Section \ref{sec3} introduces the concept of nearly $T^*_{z,B_n}$-invariant subspaces and presents an invariant subspace theorem. The final section, Section \ref{sec4}, is devoted to examples of nearly $T^*_{z,B_n}$-invariant subspaces. These subspaces are precisely the kernels of certain finite-rank perturbations of Toeplitz operators. 
 

\section{Perturbations of $T_\vp^*$}\label{sec: pert of TO}
 

In this section, we aim to represent invariant subspaces of finite-rank perturbations of $T_z^*$. However, our techniques are more refined, allowing us to produce results with a significantly higher level of generality. In the following, we establish the problem within the context of the Toeplitz operator with an inner function symbol that vanishes at the origin. Recall that a function $\vp \in H^\infty$ is \textit{inner} if
\[
|\vp(z)| = 1,
\]
for a.e. $z \in \T$ (in the sense of radial limits of $H^\infty$-functions). Let $\vp \in H^\infty$ be an inner function. In this case, the Toeplitz operator $T_\vp$ (see \eqref{eqn: TO}) turns into an analytic Toeplitz operator with the symbol $\vp \in H^\infty$, therefore simplifying to
\[
T_\vp f =\vp f,
\]
for all $f \in H^2$. We will consider rank-$m$ perturbations of $T_\vp^*$. Let $\{u_i\}_{i=1}^m$ and $\{v_i\}_{i=1}^m$ be arbitrary but fixed orthonormal sets of functions in $ H^2$. Define
\[
\frs = T_\vp^* + \sum_{i=1}^{m} v_i \otimes u_i.
\]
Our goal is to parameterize all the invariant subspaces of $\frs$.

To achieve this, we first recall that an operator $T \in \clb(\clh)$ is said to be in $C_{\cdot 0}$ (in short, $T \in C_{\cdot 0}$) if $T$ is a contraction and
\[
\text{SOT}-\lim_{n \raro \infty} T^{*n} = 0,
\]
that is, ${T^*}^nh\to 0$ as $n\to \infty$ for all $h \in \clh$. The following fact is particularly useful, as it affirms that $C_{\cdot 0}$ is invariant under multiplication by projections with finite-codimenional ranges (see \cite[Lemma 3.3]{BeTi00}): Let $T$ on $\clh$ be a $C_{\cdot 0}$ contraction, and let $\clf$ be a finite-codimensional subspace of $\clh$. If $\dim (\text{ran} (I-T^*T)) <\infty$, then
\begin{equation}\label{eqn: Benhida}
TP_\clf\in C_{\cdot 0}.
\end{equation}
Here, given a closed subspace $\cls$ of $\clh$, we denote by $P_\cls$ the orthogonal projection onto $\cls$.

Returning to our perturbation situation, we first define Sarason-type finite-rank perturbation $\sfrs$ on $H^2$ as (see \eqref{eqn: Sarason})
\[
\sfrs := T_\vp^* - \sum_{i=1}^{m}  T_\vp^* u_i \otimes u_i.
\]
In other words, $\sfrs = T_\vp^*(I - \sum_{i=1}^{m}  u_i \otimes u_i)$. Equivalently
\[
\sfrs f = T_\vp^* f - \sum_{i=1}^{m} \langle f, u_i\rangle T_\vp^*u_i,
\]
for all $f \in H^2$. We claim that
\begin{equation}\label{eqn: C dot zero}
\sfrs^* \in  C_{\cdot 0}.
\end{equation}
This is essentially the result of the fact \eqref{eqn: Benhida}. Indeed, first we observe that
\[
\sfrs = T_\vp^*(I-P_{\clf}),
\]
where $\mathcal{F} = span\{u_1, \ldots, u_m\}$ is a finite-dimensional subspace of $H^2$. As $T_\vp$ is a contraction and $I - P_{\clf}$ is an orthogonal projection, $\sfrs$ is a contraction. Since $T_\vp \in C_{ \cdot0}$ and $\clf$ is finite-dimensional, \eqref{eqn: Benhida} stated above implies that $T_\vp(I - P_{\clf})$ is a $C_{ \cdot0}$ contraction. For each $g \in H^2$ and $n \geq 1$, we compute
\[
\sfrs^n g = (T_\vp^*(I-P_{\clf}))^n g = T_\vp^*([T_\vp (I-P_\clf)]^*)^{n-1}(I-P_{\clf})g.
\]
Then
\[
\|\sfrs^n g\| \leq \|T_\vp^*\| \|([T_\vp (I-P_\clf)]^*)^{n-1}(I-P_{\clf})g\|,
\]
implies that $\sfrs^* \in  C_{\cdot 0}$, thereby proving \eqref{eqn: C dot zero}. We emphasize that the technique above has become typical in showing similar results.

We need to fix some more notations. Given a natural number $p \geq 1$, $H^2_{\C^p}$ will denote the $\C^p$-valued Hardy space over $\D$. It is defined as:
\begin{equation}\label{eqn: Hardy}
H^2_{\C^p} =\Big\{F(z)=\sum_{n= 0}^{\infty} A_nz^n: \|F\|^2= \sum_{n= 0}^{\infty} \|{A_n}\|_{\C^p}^2<\infty, A_n\in\C^p, z\in\D\Big\}.
\end{equation}
Furthermore, in the subsequent discussion, we shall define identification as up to unitary equivalence. We will frequently identify $H^2_{\C^p}$ with $H^2 \otimes \C^p$. The other common identification that will be used throughout is $H^2_{\C^{p+1}}$ and $H^2_{\C^p} \oplus H^2$. The concept of model spaces is an additional component that will be utilized in the contents that follow. Given $p' \leq p$ and an inner function $\Psi \in H^\infty_{\clb(\C^{p^\prime},\C^{p})}$, the quotient space
\begin{equation}\label{eqn: model space}
\clk_\Psi = H^2_{\C^{p}}/ \Psi H^2_{\C^{p'}},
\end{equation}
classifies $(T_z^* \otimes I_{\C^p})$-invariant closed subspaces of $H^2_{\C^p}$. This is the consequence of the celebrated Beurling-Lax theorem \cite[Chapter V. Theorem 3.3]{NaFo70}. The subspace $\clk_\Psi$ is commonly referred to as the \textit{model space} corresponding to the inner function $\Psi$ \cite{Nikolski}.

Now we are in a position to present the description of the invariant subspaces of finite-rank perturbations of $T_\vp^*$. In the following statement, $[f_1, \ldots , f_p,1]_{1\times (p+1)}$ refers to the row vector in the $(p+1)$-copies of $H^2$ (which is identified with $H^2_{\C^{p+1}}$). Moreover, often we will express vectors $G$ in $H^2_{\C^{p+1}}$ as column vectors $[g_1, \ldots, g_{p+1}]^t$, where $g_i \in H^2$ for all $i=1, \ldots, p+1$. With this notation in mind, we get
\[
[f_1, \ldots , f_p,1]_{1\times (p+1)} G = \sum_{i=1}^{p}f_i g_i + g_{p+1},
\]
whenever the sum is defined.

\begin{theorem}\label{Main_th_N}
Let $\vp \in H^\infty$ be an inner function, and let $\clm$ be a closed subspace of $H^2$. Suppose $\vp(0)=0$. If $\clm \subseteq H^2$ is an invariant subspace of $T_\vp^* + \sum_{i=1}^{m} v_i \otimes u_i$, then there exists a $(T_\vp^* \otimes I_{\mathbb{C}^{p+1}})$-invariant closed subspace $\clk \subseteq H^2_{\C^{p+1}}$ such that
\[
\clm = [F_0,1]_{1\times (p+1)} \clk,
\]
where $F_0 = [f_1, \ldots , f_p]_{1\times p}$, and $\{f_i\}_{i=1}^p$ is an orthonormal basis for
\[
\text{span}\{P_\clm u_1, \ldots ,P_\clm u_m\}.
\]
Moreover, $\clk$ can be represented as
\[
\mathcal{K} = \{(F, f_0)\in H^2_{\C^p} \oplus H^2 : F_0 F+f_0 \in \clm \textit{ and } \|F\|^2 +\|f_0\|^2 = \|F_0 F+f_0\|^2\}.
\]
\end{theorem}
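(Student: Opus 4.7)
The plan is to construct $\clk$ explicitly as the set in the theorem and verify three properties: (i) the map $\clj(F,f_0) := F_0 F + f_0 = \sum_{i=1}^{p} f_i F_i + f_0$ restricts to an isometric bijection $\clj|_\clk : \clk \raro \clm$; (ii) $\clk$ is a closed linear subspace of $H^2_{\C^{p+1}}$; and (iii) $\clk$ is $(T_\vp^* \otimes I_{\C^{p+1}})$-invariant.

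First, for (i), surjectivity is immediate: for every $g \in \clm$ the pair $(0,\ldots,0,g) \in \clk$ maps to $g$; injectivity and the isometric property drop out of the defining norm equality. For (ii), closure under scalar multiplication is trivial, and closure under vector addition uses polarization applied to the norm equality to produce the inner-product identity $\langle \clj K, \clj K'\rangle = \langle K, K'\rangle$ for $K,K' \in \clk$, from which the norm equality for $K+K'$ follows. Norm-closedness rests on a weak-compactness argument: a convergent sequence in $\clk$ gives a bounded sequence of $\clj$-images in $\clm$, and weak compactness together with pointwise convergence on $\D$ identifies the weak limit with the $\clj$-image of the limit pair, which lies in the (weakly closed) $\clm$; lower semicontinuity of the norm then preserves the isometric equality.

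The heart of the argument is (iii). Fix $(F,f_0) \in \clk$ and set $g = \clj(F,f_0) \in \clm$. I would show $\tilde g := \sum_{i=1}^{p} f_i T_\vp^* F_i + T_\vp^* f_0 \in \clm$ and the corresponding norm equality. Using $\vp(0)=0$ to factor $\vp = z\psi$ with $\psi$ inner, and the Taylor-coefficient identity $T_z^*(hK) = h T_z^* K + K(0) T_z^* h$ valid whenever $h,K,hK \in H^2$ and extended via $T_\vp^* = T_\psi^* T_z^*$ to the sums at hand, I derive $\tilde g = T_\vp^* g - \sum_i F_i(0) T_\vp^* f_i$. Substituting $T_\vp^* g = \frs g - \sum_j \langle g,u_j\rangle v_j$ and $T_\vp^* f_i = \frs f_i - \sum_j \langle f_i,u_j\rangle v_j$ (both right-hand-side $\frs$-terms lying in $\clm$ by invariance), the expression regroups as an element of $\clm$ plus a correction $\sum_j c_j v_j$ with $c_j = \sum_i F_i(0)\langle f_i,u_j\rangle - \langle g,u_j\rangle$. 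The orthonormality of $\{f_i\}_{i=1}^{p}$ in $\text{span}\{P_\clm u_1,\ldots,P_\clm u_m\}$ combined with the identity $\langle \cdot, u_j\rangle = \langle \cdot, P_\clm u_j\rangle$ valid on $\clm$ forces this correction to land in $\clm$, so $\tilde g \in \clm$. The norm equality is verified by an analogous computation exploiting the contractivity of $T_\vp^*$ and the isometric property of the original $(F,f_0)\in \clk$.

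The main obstacle is (iii), specifically the algebraic bookkeeping of the correction terms arising from commuting $T_\vp^*$ past multiplication by each $f_i \in H^2 \setminus H^\infty$, and showing that the $v_j$-correction combines to an element of $\clm$ precisely because $\{f_i\}$ was chosen as an orthonormal basis of $\text{span}\{P_\clm u_1,\ldots,P_\clm u_m\}$; this choice is what couples the perturbation $\sum v_i\otimes u_i$ to the geometry of $\clm$ in the right way.
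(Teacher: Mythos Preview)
Your approach has a genuine gap at the very first step, and it propagates through (i), (ii), and (iii).

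\textbf{Injectivity and linearity fail.} Take any $i\in\{1,\ldots,p\}$ (assume $p\geq 1$, otherwise the theorem is trivial). The pair $(e_i,0)\in H^2_{\C^p}\oplus H^2$ satisfies $F_0 e_i + 0 = f_i\in\clm$ and $\|e_i\|^2+\|0\|^2 = 1 = \|f_i\|^2$, so $(e_i,0)\in\clk$. But by your own observation, $(0,f_i)\in\clk$ as well, and $\clj(0,f_i)=f_i=\clj(e_i,0)$. Hence $\clj|_\clk$ is \emph{not} injective. Moreover, if $\clk$ were a linear subspace, then $(e_i,-f_i)\in\clk$, yet $F_0 e_i - f_i = 0$ while $\|e_i\|^2+\|f_i\|^2=2\neq 0$, violating the defining norm equality. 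So $\clk$ is not a linear subspace, and your polarization argument in (ii) is circular: to polarize you must already know the norm identity for $K\pm K'$, which is precisely membership of $K\pm K'$ in $\clk$.

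\textbf{Invariance also fails.} For the same reason, part (iii) cannot hold for $\clk$ as written. In the simplest instance $\vp(z)=z$, $m=1$, $u_1=v_1=1$, $\clm=\C$, $f_1=1$, one checks that $(1+z,\,1-z)\in\clk$ but $(T_z^*(1+z),\,T_z^*(1-z))=(1,-1)\notin\clk$ (since $1+(-1)=0$ yet $\|1\|^2+\|{-1}\|^2=2$). Your sketched verification of the norm equality for $(T_\vp^*F,T_\vp^*f_0)$ via ``contractivity of $T_\vp^*$'' cannot succeed in general.

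\textbf{What the paper does that you are missing.} The paper does not start from the set description of $\clk$ and verify properties; instead it builds, for each $f\in\clm$, a \emph{specific} pair $(F,f_0)$ by iterating the decomposition $\clm=\clw\oplus(\clm\cap\clw^\perp)$ and the identity $T_\vp^*|_{\clm\cap\clw^\perp}=\frs|_{\clm\cap\clw^\perp}$, then uses the $C_{\cdot 0}$ property of $\sfrs^*$ to control the remainder. This iteration is a \emph{linear} isometry $\clm\to H^2_{\C^{p+1}}$, so its image is automatically a closed linear subspace, and the explicit series form of the constructed $(F,f_0)$ is what drives the $(T_\vp^*\otimes I)$-invariance argument. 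Your shortcut of taking $(0,\ldots,0,g)$ bypasses exactly the structure that makes the theorem work; the point is that the ``extra'' coordinates $F$ must record the successive $\clw$-components of the iterates, not be set to zero. (As a side remark, the set description of $\clk$ given in the statement is thus an overstatement of the subspace actually produced by the paper's construction; the proof implicitly works with the image of the iteration.)
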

\begin{proof}
We open by defining the subspace
\[
\clw : = span\{P_\clm u_1, \ldots ,P_\clm u_m\},
\]
and noting that the following inequality occurs within the context of the theorem's statement:
\[
p:=\dim \clw \leq m = \text{rank }(\sum_{i=1}^{m} v_i \otimes u_i).
\]
Following our convention, we set $\frs = T_\vp^* + \sum_{i=1}^{m} v_i \otimes u_i$ and $\sfrs= T_\vp^* - \sum_{i=1}^{m}  T_\vp^* u_i \otimes u_i$. We know that $\clm$ is invariant under $\frs$. If $\clw = \{0\}$, then $\{u_1, \ldots ,u_m\} \perp \clm$, which implies that $\clm$ is $T_\vp^*$ invariant. In this case, we simply write $\clm = [1] \clk$, where $\clk = \clm$. Next, we assume that $\clw$ is nontrivial:
\[
\clw \neq \{0\}.
\]
Then $1 \leq p \leq m$. By noting that $\clw \subseteq \clm$, we decompose $\clm$ as
\[
\clm = \clw \oplus (\clm \cap \clw^\perp).
\]
Pick an orthonormal basis $\{f_i\}_{i=1}^p$ for $\clw$, and fix $f \in \clm$. There exists $\{a_{0j}\}_{j=1}^p \subseteq \C$ such that
\[
 P_{\clw} f = a_{01}f_1 + \cdots +a_{0p}f_p.
\]
We write
\[
 P_{\clw} f = F_0 A_0,
\]
where
\[
A_0= [a_{01}, \ldots ,a_{0p}]^t \in \C^p,
\]
and
\[
F_0 = [f_1, \ldots, f_p]_{1\times p},
\]
denotes the row vector. Since $f = P_{\clw} f \oplus  P_{\clm \cap \clw^\perp} f$, it follows that
\begin{equation}\label{eqn: f = FA+g}
f = F_0 A_0 \oplus g_1,
\end{equation}
where
\[
g_1:= P_{\clm \cap \clw^\perp} f \in \clm \cap \clw^\perp.
\]
Moreover
\[
\|{f}\|^2 = \|{A_0}\|^2 +\|{g_1}\|^2 .
\]
Now we proceed to simplify the function $g_1$.	As $\frs \clm \subseteq \clm$, it follows that
\[
l_1 := \frs g_1 \in \clm.
\]
We also compute
\[
\begin{split}
l_1 & = (T_\vp^* - \sum_{i=1}^{m} v_i \otimes u_i) g_1
\\
& = T_\vp^* g_1 - \sum_{i=1}^{m} \langle g_1, u_i\rangle v_i
\\
& = T_\vp^* g_1,
\end{split}
\]
as $g_1 \in \clm \cap \clw^\perp$ and $\clw = \text{span}\{P_\clm u_j: j=1, \ldots, m\}$. Since $T_\vp T_\vp^* = I - P_{\clk_\vp}$, we conclude that $T_\vp l_1  = T_\vp T_\vp^* g_1 = g_1 -P_{\clk_\vp} g_1$. This readily implies that (note that $T_\vp l_1 = \vp l_1$)
\[
g_1 = \vp l_1 \oplus P_{\clk_\vp} g_1,
\]
and then $f = F_0 A_0 \oplus (l_1 \vp + P_{\clk_\vp} g_1)$. Also, we have
\[
\|{g_1}\|^2 = \|{l_1}\|^2 + \|P_{\clk_\vp} g_1\|^2.
\]
By summarizing all the above observations, we conclude that
\[
\begin{cases}
f = F_0 A_0 \oplus l_1\vp \oplus P_{\clk_\vp} g_1
\\
\|{f}\|^2 = \|{A_0}\|^2 +\|{l_1}\|^2 + \|P_{\clk_\vp} g_1\|^2,
\end{cases}
\]
and
\[
l_1 = (\frs P_{\clm \cap \clw^\perp}) f.
\]
We now employ the technique of representing $f \in \clm$ outlined above to represent $l_1 \in \clm$ (which will lead to the induction steps). Therefore, we first decompose
\[
l_1 = F_0A_1 \oplus g_2,
\]
where $g_2 = P_{\clm \cap \clw^\perp} l_1 \in \clm \cap \clw^\perp$, $A_1 = [a_{11}, \ldots ,a_{1p}]^t \in \C^p$, and
\[
\|{l_1}\|^2 = \|{A_1}\|^2 +\|{g_2}\|^2.
\]
A computation similar to the one above implies that
\[
g_2 = l_2\vp \oplus P_{\clk_\vp}g_2,
\]
and
\[
\|{g_2}\|^2 = \|{l_2}\|^2 + \| P_{\clk_\vp}g_2\|^2,
\]
where
\[
l_2:=\frs g_2 \in \clm.
\]
Therefore, after these two steps, we have
\[
\begin{cases}
f = F_0 (A_0 +A_1\vp) + l_2\vp^2 +(P_{\clk_\vp} g_1 + (P_{\clk_\vp}g_2)\vp)
\\
\|{f}\|^2 = \|{A_0}\|^2 +\|{A_1}\|^2 +\|{l_2}\|^2 +\| P_{\clk_\vp} g_1\|^2 +\| P_{\clk_\vp} g_2\|^2.
\end{cases}
\]
We also have $l_2 = \frs g_2 = \frs P_{\clm \cap \clw^\perp} l_1$, and then
\[
l_2 = \frs P_{\clm \cap \clw^\perp}  \frs g_1 = \frs P_{\clm \cap \clw^\perp} \frs P_{\clm \cap \clw^\perp} f,
\]
that is
\[
l_2 = (\frs P_{\clm \cap \clw^\perp})^2 f.
\]
By continuing this process, we obtain
\begin{equation}\label{norm_n2}
\begin{cases}
f= F_0 (\sum_{j=0}^n A_j z^j) + l_{n+1}\vp^{n+1} + \sum_{j=1}^{n+1} (P_{\clk_\vp} g_j) \vp^{j-1}
\\
\|{f}\|^2 = \sum_{i=0}^{n} \|{A_i}\|^2 + \|{l_{n+1}}\|^2 + \sum_{j=1}^{n+1}\|P_{\clk_\vp} g_j\|^2,
\end{cases}		
\end{equation}
and
\[
l_n = (\frs P_{\clm \cap \clw^\perp})^n f,
\]
and $\{A_i\}_{i=1}^n \subseteq \C^p$ for all $n \geq 1$. On the other hand, $P_{\clm \cap \clw^\perp} = P_\clm P_{\clm \cap \clw^\perp}$ and $P_{\clm} u_i \in \clw$ implies that
\[
(v_i \otimes u_i) P_{\clm \cap \clw^\perp} = 0 = (T_\vp^* u_i \otimes u_i) P_{\clm \cap \clw^\perp},
\]
for all $i=1, \ldots,m$, and hence
\[
\frs P_{\clm \cap \clw^\perp} = \sfrs P_{\clm \cap \clw^\perp} (= T_\vp^* P_{\clm \cap \clw^\perp}).
\]
It follows that
\[
l_{n} = (\sfrs P_{\clm \cap \clw^\perp})^{n} f,
\]
for all $n \geq 1$. By \eqref{eqn: C dot zero}, we know, in particular, that $\sfrs^* \in C_{\cdot 0}$ and hence \eqref{eqn: Benhida} implies
\[
\sfrs^* (I - P_\clw) \in C_{\cdot 0}.
\]
Also, it is clear that $P_{\clm\cap \clw^\perp}h = (I-P_\clw)h$ for all $h\in \clm$. As
\[
\|{l_{n+1}}\| = \|{(\sfrs P_{\clm \cap \clw^\perp})^{n+1} f}\| \leq \|{\sfrs}\|\|{(P_{\clm \cap \clw^\perp} \sfrs)^n P_{\clm \cap \clw^\perp}f}\|,
\]
for all $n \geq 1$, we conclude that $\|{l_{n+1}}\| \to 0$ as $n \to \infty$. By \eqref{norm_n2}, we know that $\|{f}\|^2 = \sum_{i=0}^{n} \|{A_i}\|^2 + \|{l_{n+1}}\|^2 + \sum_{j=1}^{n+1}\|P_{\clk_\vp} g_{j}\|^2$, and hence
\[
\max\Big\{\sum_{i=0}^{n} \|{A_i}\|^2, \sum_{j=1}^{n+1} \|P_{\clk_\vp} g_j\|^2\Big\} \leq \|{f}\|^2,
\]
yielding
\[
\max\Big\{\sum_{i=0}^\infty \|{A_i}\|^2, \sum_{j=1}^\infty \|P_{\clk_\vp}g_j\|^2\Big\} \leq \|{f}\|^2.
\]
Define
\[
F(z) = \sum_{i=0}^{\infty}A_i\vp^i,
\]
and
\[
f_0(z)= \sum_{j=1}^{\infty}(P_{\clk_\vp} g_j)\vp^{j-1},
\]
for all $z \in \D$. We claim that $F \in H^2_{\C^p}$ and $f_0 \in H^2$. Observe that $\ker T_\vp^* = \clk_\vp$. Because $T_\vp$ is a pure isometry (that is, a shift operator), by the von Neumann-Wold decomposition theorem \cite[Chapter I. Theorem 1.1]{NaFo70}, we have the orthogonal decomposition
\[
H^2 = \clk_\vp \oplus \vp \clk_\vp \oplus \vp^2 \clk_\vp \oplus \cdots.
\]
Since $\sum_{j=0}^\infty \|P_{\clk_\vp}g_j\|^2 < \infty$, in view of the above decomposition, we conclude that $f_0 \in H^2$. For the $\C^p$-valued function $F$, we set
\[
T_\Phi = T_\vp \otimes I_{\mathbb{C}^p}.
\]
Since $\vp(0)=0$ implies $\mathbb{C} \subseteq \clk_\vp$, we infer that
\[
\mathbb{C}^p \cong \mathbb{C} \otimes \mathbb{C}^p \subseteq \clk_\Phi,
\]
where $\clk_\Phi = \ker T_\Phi^*$. Again, since $T_\Phi$ on $H^2 \otimes \mathbb{C}^p$ is a pure isometry (as $T_\Phi^{*n} = T^{*n}_\vp \otimes I_{\C^p} \raro 0$ in SOT), we have the von Neumann-Wold decomposition as
\[
H^2 \otimes \mathbb{C}^p =  \clk_\Phi \oplus \Phi \clk_\Phi \oplus \Phi^2 \clk_\Phi \oplus \cdots.
\]
Because $\mathbb{C}^p \subseteq \clk_\Phi$ and $\sum_{i=0}^\infty \|{A_i}\|^2 < \infty$, we conclude that $F \in H^2_{\C^p}$. Next, for each $n \geq 1$, we consider the $n$-th partial sums
\[
K_{n} = \sum_{i=0}^{n}A_i\vp^i,
\]
and
\[
k_n = \sum_{i=0}^{n}P_{\clk_\vp}(g_{i+1})\vp^i.
\]
The representation of $f$ in \eqref{norm_n2} then implies that
\[
\|{f- F_0K_n-k_n}\|_2 = \|{l_{n+1}}\|_2 \to 0,
\]
in $H^2$, and hence
\[
\|{f- F_0K_n-k_n}\|_1 \leq \|{f- F_0K_n-k_n}\|_2 \raro 0,
\]
in $H^1$. Recall that $H^1$ is the Banach space of analytic functions on $\D$ defined by \cite[Volume 1. Chapter 3]{Nikolski}
\[
H^1 = \{f \in \text{Hol}(\D): \|f\|:= \sup_{0 \leq r < 1} \int_{0}^{2\pi} |f(r e^{i\theta})|\frac{d\theta}{2 \pi} < \infty\}.
\]
Also, we compute
\begin{align*}
\|{F_0 F + f_0 - F_0 K_n - k_n}\|_1 &\leq \|{F_0 F - F_0 K_n}\|_1 +\|{f_0-k_n}\|_1\\
& \leq	\|{F_0}\|_2\|{F - K_n}\|_2 + \|{f_0 - k_n}\|_2
\\
& \longrightarrow 0.
\end{align*}
This along with $\|{f- F_0K_n-k_n}\|_1 \raro 0$ implies that $f =F_0 F +f_0$ in $H^1$. But $f\in H^2$, and then, we finally get the desired representation of $f \in \clm$ as
\begin{equation}\label{eqn: f rep}
f = F_0 F + f_0,
\end{equation}
along with the norm condition
\begin{equation}\label{eqn: f rep norm}
\|{f}\|^2 = \|F\|^2 +\|f_0\|^2.
\end{equation}
We now prove that the representation of $f$ in \eqref{eqn: f rep} under the condition \eqref{eqn: f rep norm} is unique. Suppose $f = F_0 \tilde{F} + \tilde{f}_0 = F_0 \hat{F} + \hat{f}_0$ for some $\tilde{F}, \hat{F} \in H^2_{\C^p}$ and $\tilde{f}_0, \hat{f}_0 \in H^2$. Then
\[
F_0(\tilde{F} - \hat{F}) + (\tilde{f}_0 - \hat{f}_0) = 0 \in \clm,
\]
and by \eqref{eqn: f rep norm}, it follows that
\[
\|\tilde{F} - \hat{F}\|^2 + \|\tilde{f}_0 - \hat{f}_0\|^2 = 0.
\]
Therefore, $\tilde{F} = \hat{F}$ and $\tilde{f}_0 = \hat{f}_0$, and hence, under the norm condition \eqref{eqn: f rep norm}, the representations of elements of $\clm$ as in \eqref{eqn: f rep} are unique. In view of this, now we define the closed subspace $\clk$ of $H^2_{\C^p} \oplus H^2 \cong H^2_{\C^{p+1}}$ as
\[
\mathcal{K}:= \{(F,f_0)\in H^2_{\C^p} \oplus H^2 : F_0 F + f_0 \in \clm \textit{ and } \|F\|^2 +\|{f_0}\|^2 = \|F_0 F + f_0\|^2\}.
\]
We claim that $\clk$ is invariant under $(T_\vp \otimes I_{\C^{p+1}})^*$. To see this, pick $(F, f_0)\in \clk$. By the construction of $\clk$, there exists $g \in \clm$ such that
\[
g = F_0 F +f_0.
\]
By the uniqueness part of $g$ as in \eqref{eqn: f rep} under the condition \eqref{eqn: f rep norm}, it follows that $F(z) = \sum_{i=0}^{\infty}A_i\vp^i$, and $f_0(z)= \sum_{j=1}^{\infty}(P_{\clk_\vp} g_j)\vp^{j-1}$. In the following, we shall identify $H^2$ functions with their radial limit representations. From this point of view, it follows that
\[
\vp \bar{\vp} =1,
\]
and hence
\begin{align*}
f &= F_0 F + f_0
\\
& = F_0 F(0) + F_0(F - F(0)) + (f_0 - P_{\clk_\vp} f_0) + P_{\clk_\vp} f_0
\\
& = F_0 F(0) + F_0\vp\bar{\vp}\left(F - F(0)\right)+ \vp\bar{\vp}\left(f_0-P_{\clk_\vp}f_0 \right) + P_{\clk_\vp}f_0
\\
& = F_0 F(0) + \vp (F_0 \bar{\vp}\left(F - F(0)\right)+ \bar{\vp}\left(f_0-P_{\clk_\vp}f_0 \right)) + P_{\clk_\vp}f_0.
\end{align*}
From the representations of $F$ and $f_0$, it is easy to see that $T_\Phi^* F = \bar{\vp}(F - F(0))$ and $T_\vp^* f_0 = \bar{\vp}\left(f_0-P_{\clk_\vp}f_0\right)$, and hence
\[
f = F_0 F(0) + \vp (F_0 T_\Phi^* F + T_\vp^* f_0) + P_{\clk_\vp} f_0.
\]
But $F(0) = A_0$, and then
\[
f = F_0 A_0 + \vp (F_0 T_\Phi^* F + T_\vp^* f_0) + P_{\clk_\vp} f_0.
\]
Recall from the above construction that $f = F_0 A_0 \oplus (l_1 \vp + P_{\clk_\vp} g_1)$. Since
\[
P_{\clk_\vp} f_0 = P_{\clk_\vp} (\sum_{j=1}^{\infty}(P_{\clk_\vp} g_j)\vp^{j-1}) = P_{\clk_\vp} g_1,
\]
we conclude that
\[
f = F_0 A_0 \oplus (l_1 \vp + P_{\clk_\vp} f_0).
\]
Comparing this with the above representation of $f$, we finally obtain (recall that $\vp\bar{\vp} = 1$) $F_0 T_\Phi^* F + T_\vp^* f_0 = l_1 \in \clm$. In other words
\[
((T_\vp^* \otimes I_{\C^p}) F, T_\vp^* f_0) = (T_\Phi^* F, T_\vp^* f_0)\in \clk,
\]
and hence, $\clk$ is invariant under $T_\vp^* \otimes I_{\C^{p+1}}$. This completes the proof of the theorem.
\end{proof}

As we previously mentioned, complete description of the lattices of invariant subspaces of natural operators is a frequently difficult task and a rare occurrence. The answers often offer useful information on a variety of subjects (cf. the Beurling theorem \cite{Beurling} and its applications \cite{NaFo70, Nikolski}). At this moment, it is unclear how far the above result extends in terms of application. However, we will continue our discussion in light of the above results. First, we apply it to the case $\theta(z) = z$ for all $z \in \D$. We already pointed out that the $(T_z^* \otimes I_{\C^p})$-invariant subspaces are precisely model spaces (see \eqref{eqn: model space}). Therefore, we have:

\begin{corollary}\label{cor: T_z per inv sub}
Let $\clm$ be a closed subspace of $H^2$. If $\clm$ is invariant under $T_z^* + \sum_{i=1}^{m} v_i \otimes u_i$, then there exist $p' \leq p+1$ and an inner function $\Psi \in H^\infty_{\clb(\C^{p'}, \C^{p+1})}$ such that
\[
\clm = [F_0,1]_{1\times (p+1)} \clk_\Psi,
\]
where $F_0 = [f_1, \ldots, f_p]$, and $\{f_i\}_{i=1}^p$ is an orthonormal basis for $\text{span}\{P_\clm u_1, \ldots ,P_\clm u_m\}$. Moreover
\[
\mathcal{K}_\Psi = \{(F,f_0)\in H^2_{\C^p} \oplus H^2 : F_0 F + f_0 \in \clm \textit{ and } \|F_0 F + f_0\|^2 = \|F\|^2 +\|f_0\|^2\}.
\]
\end{corollary}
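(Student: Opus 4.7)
The statement to be proved is the specialization of Theorem \ref{Main_th_N} to the symbol $\vp(z) = z$, so my plan is to apply the general theorem and then extract the required concrete form of the invariant subspace $\clk$ using the Beurling--Lax theorem. First I would verify the hypotheses: the coordinate function $z$ is clearly inner on $\D$ and $z(0)=0$, so $\vp = z$ falls within the scope of Theorem \ref{Main_th_N}. Applied to $\clm$, the theorem yields a $(T_z^* \otimes I_{\C^{p+1}})$-invariant closed subspace $\clk \subseteq H^2_{\C^{p+1}}$ together with the representation $\clm = [F_0,1]_{1 \times (p+1)}\clk$, where $F_0 = [f_1, \ldots, f_p]$ and $\{f_i\}_{i=1}^p$ is an orthonormal basis for $\text{span}\{P_\clm u_1, \ldots, P_\clm u_m\}$. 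Moreover, the theorem supplies the explicit description
\[
\clk = \{(F,f_0) \in H^2_{\C^p} \oplus H^2 : F_0 F + f_0 \in \clm \text{ and } \|F_0 F + f_0\|^2 = \|F\|^2 + \|f_0\|^2\},
\]
which is exactly the description of $\clk_\Psi$ required in the statement.

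Next I would convert the abstract subspace $\clk$ into model-space form. The operator $T_z \otimes I_{\C^{p+1}}$ on $H^2_{\C^{p+1}}$ is a pure isometry of multiplicity $p+1$, so the orthogonal complement $\clk^\perp$ is a closed subspace of $H^2_{\C^{p+1}}$ invariant under $T_z \otimes I_{\C^{p+1}}$. By the vector-valued Beurling--Lax theorem (cited in the paper as \cite[Chapter V. Theorem 3.3]{NaFo70}), there exist $p' \leq p+1$ and an inner function $\Psi \in H^\infty_{\clb(\C^{p'}, \C^{p+1})}$ such that
\[
\clk^\perp = \Psi H^2_{\C^{p'}}.
\]
Consequently $\clk = H^2_{\C^{p+1}} \ominus \Psi H^2_{\C^{p'}}$, which coincides with $\clk_\Psi$ as defined in \eqref{eqn: model space}.

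Substituting this identification of $\clk$ as $\clk_\Psi$ into the representation from Theorem \ref{Main_th_N} gives the displayed formula $\clm = [F_0,1]_{1 \times (p+1)} \clk_\Psi$ together with the described explicit expression for $\clk_\Psi$. The bound $p' \leq p+1$ is automatic from Beurling--Lax applied to an $H^2_{\C^{p+1}}$-valued shift. I do not anticipate any genuine obstacle: the real content has already been absorbed into Theorem \ref{Main_th_N}, and the only additional ingredient is the Beurling--Lax parametrization of shift-invariant subspaces, which is purely a structural translation from $(T_z^* \otimes I)$-invariance to a model-space description.
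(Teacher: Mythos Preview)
Your proposal is correct and follows exactly the route the paper takes: the corollary is obtained by specializing Theorem \ref{Main_th_N} to $\vp(z)=z$ and then invoking the Beurling--Lax classification of $(T_z^*\otimes I_{\C^{p+1}})$-invariant subspaces as model spaces $\clk_\Psi$ (the paper refers to \eqref{eqn: model space} for this). You have simply spelled out in more detail what the paper compresses into a one-line remark.
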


In the following section, we will also talk about the opposite direction of the above result.

\section{Perturbations of $T_z^*$}\label{sec: pert of shift}

The setting of Theorem \ref{Main_th_N} is exceedingly broad, embracing invariant subspaces of finite-rank perturbations of $T_\vp^*$. This one-sided result fits Hitt and Sarason's perspectives. Indeed, their results were only one-directional. In this section, we depart from this pattern and try to explain a converse direction. Our converse, however, makes better sense for the particular case of the special inner function: $\vp(z) = z$, $z \in \D$.

Thus, we will follow the setting of Corollary \ref{cor: T_z per inv sub} and subsequently keep on assuming that $\{u_i\}_{i=1}^m$ and $\{v_i\}_{i=1}^m$ are orthonormal sets of vectors in $H^2$. To address the reverse direction, we first note a consequence of Corollary \ref{cor: T_z per inv sub}: There exists a unitary operator $\mathcal{J} : \clk \to \clm$,  such that
\[
\mathcal{J}(k_1, \ldots ,k_{p+1})= f_1k_1+ \cdots +f_pk_p +k_{p+1},
\]
for all $(k_1, \ldots ,k_{p+1}) \in \clk_\Psi$. This readily follows from the fact that an element $(F, f_0) \in \clk_\Psi$ can be written as $(F, f_0) = (k_1, \ldots, k_p, k_{p+1})$ (here we are rewriting $f_0 = k_{p+1}$ for notational simplicity), so that
\[
F_0 F + f_0 = f_1k_1+ \cdots +f_pk_p +k_{p+1}.
\]
We also have the identity that
\[
\|F_0 F + f_0\|^2 = \|F\|^2 +\|f_0\|^2.
\]

We fix the rank-$m$ perturbation $T_z^* - \sum_{i=1}^{m} v_i \otimes u_i$, and set
\[
\clw : = span\{P_\clm u_1, \ldots ,P_\clm u_m\}.
\]
Given a natural number $p$ (the number $p$ will be clear from the context), recall the vector-valued Hardy space identification
\[
H^2_{\C^p} \oplus H^2 \cong H^2_{\C^{p+1}}.
\]
We now move on to representing constant functions in $H^2_{\C^{p+1}}$ (like the constant function $1$ in $H^2$). Clearly, a total of $(p+1)$ basic functions of this type exist. We represent them as $\{e_i\}_{i=1}^{p+1}$. In other words, for every $i=1, \ldots, p+1$, we have
\[
e_i = (0, \ldots, 0,1,0, \ldots, 0),
\]
with $1$ at the $i$-th position and zero elsewhere. We follow the outcome of Corollary \ref{cor: T_z per inv sub}. For $F = e_i$ with $i=1, \ldots, p$, it follows that
\[
F_0 e_i + 0 = f_i \in \clm,
\]
as $\{f_j\}_{j=1}^p \subseteq \clm$. This and
\[
\|e_i\|^2 + \|0\|^2 = \|(e_i,0)\|^2,
\]
ensure that
\begin{equation}\label{eqn: e_i in K_psi}
\{e_j\}_{j=1}^p \subseteq \clk_\Psi.
\end{equation}
In this context, the first part of the result below is a particular application of Corollary \ref{cor: T_z per inv sub}.

\begin{theorem}\label{Main-th}
Let $\clm$ be a closed subspace of $H^2$. If $\clm$ is invariant under $T_z^* - \sum_{i=1}^{m} T_z^* u_i \otimes u_i$, then there exist a natural number $p' \leq p+1$ and an inner function $\Psi \in H^\infty_{\mathcal{B}(\C^{p^\prime},\C^{p+1})}$ such that
\begin{equation}\label{eqn: IS rep 1}
\clm = [F_0,1]_{1\times (p+1)} \clk_\Psi,
\end{equation}
where $F_0 = [f_1, \ldots, f_p]$ and $\{f_i\}_{i=1}^p$ is an orthonormal basis for $\clw$. Moreover
\begin{equation}\label{eqn: IS rep 2}
\mathcal{K}_\Psi = \{(F, f_0)\in H^2_{\C^p} \oplus H^2 : F_0 F + f_0 \in \clm \textit{ and } \|F_0 F + f_0\|^2 = \|{F}\|^2 +\|{f_0}\|^2\},
\end{equation}
and there exists a unitary operator $\mathcal{J} : \clk_\Psi \to \clm$ such that
\begin{equation}\label{eqn: IS rep 3}
\mathcal{J}(k_1, \ldots ,k_{p+1})= f_1k_1+ \cdots +f_pk_p +k_{p+1},
\end{equation}
for all $(k_1, \ldots ,k_{p+1}) \in \clk_\Psi$. Conversely, if $\clm$ has the representation \eqref{eqn: IS rep 1} for some orthonormal set of vectors $\{f_i\}_{i=1}^p$ in $\clm$, an inner function $\Psi \in H^\infty_{\mathcal{B}(\C^{p^\prime},\C^{p+1})}$ with $\clk_\Psi$ as in \eqref{eqn: IS rep 2}, and a unitary $\clj$ as in \eqref{eqn: IS rep 3}, then $\clm$ is invariant under $T_z^* - \sum_{i=1}^p T_z^* f_i \otimes f_i$.
\end{theorem}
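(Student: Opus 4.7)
The forward direction should come essentially for free by applying Corollary \ref{cor: T_z per inv sub} with the choice $v_i := -T_z^* u_i$. Inspection of the proof of Theorem \ref{Main_th_N} confirms that the orthonormality of the $v_i$'s is never actually used---only the orthonormality of $\{u_i\}$ enters, through the subspace $\clw$ and the cancellation $(v_i \otimes u_i) P_{\clm \cap \clw^\perp} = 0$---so the representations \eqref{eqn: IS rep 1} and \eqref{eqn: IS rep 2} carry over verbatim. For the unitary $\clj$ in \eqref{eqn: IS rep 3}, I would simply observe that $\clj$ is nothing but the restriction of the row action $[F_0, 1]$ to $\clk_\Psi$: surjectivity onto $\clm$ is exactly \eqref{eqn: IS rep 1}, and the norm identity baked into \eqref{eqn: IS rep 2} makes it an isometry, hence a unitary.

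The converse is where the real work lies. The first step is to identify the preimages $\clj^{-1} f_i$. Because $F_0 e_i = f_i \in \clm$ and $\|F_0 e_i\|^2 = 1 = \|e_i\|^2 + \|0\|^2$, the defining characterization \eqref{eqn: IS rep 2} places $(e_i, 0) \in \clk_\Psi$; thus $\clj(e_i) = f_i$ for $i = 1, \ldots, p$. Unitarity of $\clj$ now gives, for any $g = \clj(k_1, \ldots, k_{p+1}) \in \clm$,
\[
\langle g, f_i \rangle = \langle (k_1, \ldots, k_{p+1}), e_i \rangle_{H^2_{\C^{p+1}}} = k_i(0) \qquad (i = 1, \ldots, p).
\]
This identity is the decisive ingredient; everything else should fall out from it together with the backward-shift Leibniz rule $T_z^*(k f) = T_z^*(k) \cdot f + k(0) T_z^*(f)$, which itself is a one-line computation from $T_z^* h = (h - h(0))/z$.

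Expanding $T_z^* g$ on $g = \sum_{i=1}^p f_i k_i + k_{p+1}$ via the product rule, I expect
\[
T_z^* g = \sum_{i=1}^p \bigl(f_i T_z^* k_i + k_i(0) T_z^* f_i\bigr) + T_z^* k_{p+1},
\]
and subtracting $\sum_i \langle g, f_i \rangle T_z^* f_i = \sum_i k_i(0) T_z^* f_i$ should cancel precisely the ``bad'' cross-terms, leaving
\[
\Bigl(T_z^* - \sum_{i=1}^p T_z^* f_i \otimes f_i\Bigr) g = \sum_{i=1}^p f_i T_z^* k_i + T_z^* k_{p+1} = \clj\bigl((T_z^* \otimes I_{\C^{p+1}})(k_1, \ldots, k_{p+1})\bigr).
\]
Since $\clk_\Psi$ is a model space and hence $(T_z^* \otimes I_{\C^{p+1}})$-invariant, the argument of $\clj$ lies in $\clk_\Psi$ and the right-hand side lies in $\clm$, yielding the desired invariance.

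The main obstacle is the algebraic bookkeeping that binds the Leibniz expansion to the rank-$p$ correction: one needs the cross-terms $k_i(0) T_z^* f_i$ appearing in $T_z^* g$ to match exactly the action of $\sum_i T_z^* f_i \otimes f_i$ on $g$. This matching is forced precisely by the identity $\langle g, f_i \rangle = k_i(0)$, which in turn is a consequence of unitarity of $\clj$ combined with the specific form $\clj(e_i) = f_i$. Once this identification is in place, the cancellation is automatic and the invariance drops out; reverse-engineering the perturbation from the Leibniz formula is, in fact, exactly what singles out the Sarason-type form in \eqref{eqn: Sarason}.
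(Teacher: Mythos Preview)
Your proposal is correct and follows essentially the same route as the paper. The forward direction is dispatched exactly as you say (the paper simply writes ``We only need to prove the sufficient part''), and for the converse the paper carries out precisely your computation: it verifies $e_i\in\clk_\Psi$ as in \eqref{eqn: e_i in K_psi}, uses the unitarity of $\clj$ to obtain $\langle F_0 F+f_0,f_i\rangle=k_i(0)$, and then expands $T_z^*(F_0F)$ via the identity $\frac{F_0(z)F(z)-F_0(0)F(0)}{z}=F_0\,T_z^*F+(T_z^*F_0)F(0)$, which is your Leibniz rule $T_z^*(kf)=f\,T_z^*k+k(0)\,T_z^*f$ written in vector form; the cross-terms $(T_z^*F_0)F(0)=\sum_i k_i(0)\,T_z^*f_i$ then cancel against the perturbation exactly as you describe, leaving $F_0(T_z^*F)+T_z^*f_0\in\clm$.
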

\begin{proof}
We only need to prove the sufficient part. In this case, note that
\[
\clm = \{f\in H^2: f=F_0 F + f_0 \text{ for some } (F, f_0)\in \clk_\Psi \text{ with } \|f\|^2 = \|F\|^2 +\|{f_0}\|^2 \}.
\]
Given $f\in \clm$, the above representation of $\clm$ implies the existence of $(F, f_0) \in \clk_\Psi$ such that $f = F_0 F + f_0$. If we write $F = (k_0^1, \ldots, k_0^p)^t \in H^2_{\C^p}$, then
\begin{align*}
(T_z^* - \sum_{i=1}^p T_z^* f_i \otimes f_i) f &= [T_z^* - \sum_{i=1}^{p} \langle \cdot , f_i\rangle T_z^*f_i] (F_0 F + f_0)
\\
& = \frac{F_0(z) F(z) -F_0(0) F(0)}z + T_z^* f_0 - \sum_{i=1}^{p} \langle F_0 F + f_0, f_i\rangle T_z^*f_i.
\end{align*}
On the other hand
\[
\clj(k_0^1, \ldots, k_0^p, f_0) = F_0 F + f_0,
\]
and
\[
\clj e_i = f_i,
\]
for all $i=1, \ldots, p$. This conclusion arises immediately from the identity \eqref{eqn: e_i in K_psi}. Since $\clj$ is an isometry (in fact, unitary), for each $i=1, \ldots, p$, it follows that
\[
\begin{split}
\langle F_0 K_0 + f_0 , f_i\rangle & = \langle \clj(k_0^1, \ldots, k_0^p, f_0), \clj e_i \rangle
\\
& = \langle (k_0^1, \ldots, k_0^p, f_0), e_i \rangle
\\
& = k_0^i(0).
\end{split}
\]
Moreover, since $T_z^* F = \frac{F - F(0)}{z}$ and $T_z^* F_0 = \frac{F_0 - F_0(0)}{z}$, we have
\begin{align*}
(T_z^* - \sum_{i=1}^p T_z^* f_i \otimes f_i) f & = F_0 T_z^*(F) + T_z^*(F_0) F(0) + T_z^* f_0 - \sum_{i=1}^{p} k_0^i(0) T_z^*f_i
\\
& = F_0 (T_z^* F) + (T_z^* F_0) F(0) + T_z^* f_0 - (T_z^* F_0)F(0)
\\
& = F_0 (T_z^* F) + T_z^* f_0
\\
& \in \clm,
\end{align*}
proving that $\clm$ is invariant under $T_z^* - \sum_{i=1}^p T_z^* f_i \otimes f_i$. Take note that in the above, we have used the same notation $T_z^*$ to denote the operator $(T_z^* \otimes I_{\C^p})$ on $H^2_{\C^p}$.
\end{proof}

Considering the converse half of the previous statement, it is evident that the theorem concerns the classification of closed subspaces of $H^2$ that are invariant under Sarason-type perturbations of $T_z^*$. Furthermore, in the setting of Corollary \ref{cor: T_z per inv sub}, the same proof of the converse part also shows that $\clm$ is invariant under $T_z^* - \sum_{i=1}^p T_z^* f_i \otimes f_i$. The following alternative can be considered a variation of Corollary \ref{cor: T_z per inv sub}:

\begin{corollary}\label{cor: var T_z per inv sub}
Let $\clm$ be a closed subspace of $H^2$. If $\clm$ is invariant under $T_z^* + \sum_{i=1}^m v_i \otimes u_i$, then there exist $p' \leq p+1$ and an inner function $\Psi \in H^\infty_{\clb(\C^{p'}, \C^{p+1})}$ such that
\[
\clm = [F_0,1]_{1\times (p+1)} \clk_\Psi,
\]
where $\{f_i\}_{i=1}^p$ is an orthonormal basis for $\text{span}\{P_\clm u_1, \ldots ,P_\clm u_m\}$. Moreover, $\clm$ is invariant under $T_z^* - \sum_{i=1}^p T_z^* f_i \otimes f_i$.
\end{corollary}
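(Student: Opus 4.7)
The plan is straightforward because both halves of the statement have already been worked out elsewhere in the paper. The existence of $p' \leq p+1$, the inner function $\Psi$, and the row-vector representation $\clm = [F_0,1]_{1\times (p+1)} \clk_\Psi$ with the explicit description of $\clk_\Psi$ is a verbatim restatement of Corollary \ref{cor: T_z per inv sub}, so I would invoke that result to obtain this portion and then harvest the auxiliary data it supplies, namely the orthonormal basis $\{f_i\}_{i=1}^p$ of $\text{span}\{P_\clm u_1, \ldots, P_\clm u_m\}$ and the norm identity $\|F_0 F + f_0\|^2 = \|F\|^2 + \|f_0\|^2$ for $(F,f_0) \in \clk_\Psi$.

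For the \textbf{Moreover} clause I would re-run the computation from the converse direction of Theorem \ref{Main-th}, now feeding it the data just extracted. The norm identity makes the map $\clj(k_1, \ldots, k_{p+1}) = f_1 k_1 + \cdots + f_p k_p + k_{p+1}$ a unitary $\clj : \clk_\Psi \to \clm$, and the constant vectors $\{e_i\}_{i=1}^p \subseteq \clk_\Psi$ satisfy $\clj e_i = f_i$ by \eqref{eqn: e_i in K_psi}. Given $f \in \clm$, write $f = F_0 F + f_0$ with $F = (k_0^1, \ldots, k_0^p)^t$ and $(F,f_0) \in \clk_\Psi$; unitarity of $\clj$ gives $\langle f, f_i\rangle = k_0^i(0)$ for $1 \leq i \leq p$, and applying the identities $T_z^* F_0 = (F_0 - F_0(0))/z$ and $T_z^* F = (F - F(0))/z$ lets the two terms $T_z^*(F_0) F(0)$ and $\sum_{i=1}^p k_0^i(0) T_z^* f_i$ cancel, yielding
\[
\Big(T_z^* - \sum_{i=1}^p T_z^* f_i \otimes f_i\Big) f = F_0 (T_z^* F) + T_z^* f_0.
\]
Since $\clk_\Psi$ is $(T_z^* \otimes I_{\C^{p+1}})$-invariant, $(T_z^* F, T_z^* f_0)$ lies in $\clk_\Psi$, so the right-hand side lies in $[F_0,1]_{1\times (p+1)} \clk_\Psi = \clm$, establishing the claimed invariance.

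There is no substantive obstacle; the corollary is deliberately packaged so that Corollary \ref{cor: T_z per inv sub} supplies the representation and the converse computation of Theorem \ref{Main-th} supplies the invariance. The only point to record carefully is that the perturbation is built from the basis $\{f_i\}$ of $\text{span}\{P_\clm u_j\}$ rather than from the original $\{u_j\}$, which is precisely what makes the cancellation above go through.
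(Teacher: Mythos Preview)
Your proposal is correct and matches the paper's own argument essentially verbatim: the paper remarks immediately before the corollary that the representation is Corollary~\ref{cor: T_z per inv sub} and that ``the same proof of the converse part'' of Theorem~\ref{Main-th} yields the invariance under $T_z^* - \sum_{i=1}^p T_z^* f_i \otimes f_i$, which is exactly the two-step plan you carried out.
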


 

\section{Perturbations of $T_z$}\label{sec: pert of T-z}

Let $T \in \clb(\clh)$ and let $\clm$ be a closed subspace of $\clh$. In the introductory section, we already pointed out the elementary fact that
\[
T\clm \subseteq \clm,
\]
if and only if
\[
T^* \clm^\perp \subseteq \clm^\perp.
\]
However, if $\clm$ is explicit, then it need not be the case for $\clm^\perp$. For instance, the concrete representations of finite-rank perturbations of $T_z^*$-invariant subspaces do not seem to yield concrete representations of finite-rank perturbations of $T_z$-invariant subspaces. In this section, we examine a class of perturbations of $T_z$, and obtain explicit representations of invariant subspaces of the perturbations. As hinted above, the primary method will involve exploiting the adjoint of the perturbation used in Theorem \ref{Main_th_N}.

As usual, we consider orthonormal sets of vectors $\{u_i\}_{i=1}^m$ and $\{v_i\}_{i=1}^m$ in $ H^2$. Given a closed subspace $\clm \subseteq H^2$, we let
\[
\cll : = span\{P_{\clm^\perp} v_1, \ldots, P_{\clm^\perp} v_m\},
\]
and assume that
\[
p = \dim \cll.
\]
In comparison to Theorem \ref{Main_th_N}, the following assumes that there is an orthonormal basis for $\cll$ that is contained in $H^\infty$.

\begin{theorem}\label{thm: IS of Tz pert}
Let $\clm$ be an invariant subspace of $T_z - \sum_{i=1}^{m} v_i \otimes u_i$. Suppose $\{\vp_i\}_{i=1}^p \subseteq H^\infty$ is an orthonormal basis for $\cll$. Then
\[
\clm = \{g\in H^2 : (T_{\bar{\vp}_1} g, \ldots, T_{\bar{\vp}_p} g, g) \in \Psi H^2_{\C^{p^\prime}}\},
\]
for some $p' \leq p+1$ and inner function $\Psi \in H^\infty_{\clb(\C^{p'}, \C^{p+1})}$.
\end{theorem}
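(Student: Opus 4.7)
The plan is to obtain $\clm$ by taking orthogonal complements and invoking Corollary \ref{cor: T_z per inv sub} on $\clm^\perp$. Since $\clm$ is invariant under $T_z - \sum_{i=1}^m v_i \otimes u_i$, its orthogonal complement $\clm^\perp$ is invariant under the adjoint
\[
T_z^* - \sum_{i=1}^m u_i \otimes v_i,
\]
which is a rank-$m$ perturbation of $T_z^*$ of exactly the shape covered by Corollary \ref{cor: T_z per inv sub}, once one notes that in applying the corollary here the ``second tensor factors'' are the $v_i$. Consequently, the relevant span appearing in the corollary is $\text{span}\{P_{\clm^\perp} v_1, \ldots, P_{\clm^\perp} v_m\} = \cll$, and I may take the given orthonormal basis $\{\vp_i\}_{i=1}^p$ of $\cll$ as the orthonormal basis supplied by the corollary. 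This produces a natural number $p' \leq p+1$ and an inner function $\Psi \in H^\infty_{\clb(\C^{p'},\C^{p+1})}$ with
\[
\clm^\perp = [\vp_1,\ldots,\vp_p,1]_{1\times(p+1)}\,\clk_\Psi.
\]

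Next, I would convert the condition $g \in \clm = (\clm^\perp)^\perp$ into an orthogonality statement in $H^2_{\C^{p+1}}$. A generic element of $\clm^\perp$ has the form $h = \sum_{i=1}^p \vp_i F_i + f_0$ with $(F_1,\ldots,F_p,f_0) \in \clk_\Psi$. The key computation is that for each $i$,
\[
\langle g,\, \vp_i F_i\rangle_{H^2} = \langle \bar\vp_i g,\, F_i\rangle_{L^2} = \langle T_{\bar\vp_i} g,\, F_i\rangle_{H^2},
\]
since $F_i \in H^2$ and $T_{\bar\vp_i} = P_{H^2}M_{\bar\vp_i}|_{H^2}$. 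Summing over $i$ and adding $\langle g, f_0\rangle$ reassembles the inner product as
\[
\langle g,\, h\rangle_{H^2} = \bigl\langle (T_{\bar\vp_1} g, \ldots, T_{\bar\vp_p} g, g),\, (F_1,\ldots,F_p,f_0)\bigr\rangle_{H^2_{\C^{p+1}}}.
\]
Hence $g \perp \clm^\perp$ if and only if the vector $(T_{\bar\vp_1} g, \ldots, T_{\bar\vp_p} g, g) \in H^2_{\C^{p+1}}$ is orthogonal to $\clk_\Psi$; by the Beurling--Lax identification $\clk_\Psi = H^2_{\C^{p+1}} \ominus \Psi H^2_{\C^{p'}}$, this orthogonality is equivalent to the vector lying in $\Psi H^2_{\C^{p'}}$, which is the claimed representation.

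The computation itself is routine once the setup is in place; I expect the only delicate point to be the bookkeeping under adjoints. One must correctly identify which orthonormal family in the original perturbation plays the role of the ``$u_i$'' in Corollary \ref{cor: T_z per inv sub} — it is our $v_i$, not our $u_i$ — so that the span $\cll$ appearing in the hypothesis matches the span the corollary produces. Beyond that, the argument is just a direct manipulation of Toeplitz adjoints together with the standard description of the orthogonal complement of a vector-valued model space.
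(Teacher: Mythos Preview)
Your proposal is correct and follows essentially the same approach as the paper: pass to $\clm^\perp$, apply the structure theorem for $T_z^*$-perturbation invariant subspaces (the paper cites Theorem~\ref{Main_th_N}, you cite its special case Corollary~\ref{cor: T_z per inv sub}) to write $\clm^\perp = [\vp_1,\ldots,\vp_p,1]\,\clk_\Psi$, and then compute $\langle g,f\rangle$ via $T_{\vp_i}^* = T_{\bar\vp_i}$ to turn $g\perp\clm^\perp$ into the membership condition $(T_{\bar\vp_1}g,\ldots,T_{\bar\vp_p}g,g)\in\Psi H^2_{\C^{p'}}$. Your care in tracking which family plays the role of the ``$u_i$'' under the adjoint is exactly the bookkeeping the paper performs implicitly.
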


\begin{proof}
Note that $\clm^\perp$ is invariant under $T_z^* -\sum_{i=1}^{m} u_i \otimes v_i$. By Theorem \ref{Main_th_N}, there exist $p' \leq p+1$ and an inner function $\Psi \in H^\infty_{\clb(\C^{p'}, \C^{p+1})}$ such that
\[
\clm^\perp = [\vp_1, \ldots, \vp_p, 1]_{1 \times (p+1)} \clk_\Psi.
\]
Let us assume that $g \in H^2$. Then $g \in \clm^\perp$ if and only if $\langle g,f\rangle =0$ for all $f\in \clm^\perp$. By the above representation of $\clm^\perp$, we consider an arbitrary $f$ from $\clm^\perp$ as $f = \sum_{i=1}^{p} \vp_i k_i + k_{p+1}$ for some $(k_1, \ldots, k_{p+1})\in \clk_\Psi$, and compute
\begin{align*}
\langle g,f\rangle & = \langle g, \sum_{i=1}^{p} \vp_i k_i + k_{p+1} \rangle
\\
& = \langle g, \sum_{i=1}^{p} \vp_i k_i \rangle + \langle g, k_{p+1} \rangle
\\
& = \sum_{i=1}^{p} \langle T_{\bar{\vp}_i} g, k_i \rangle + \langle g, k_{p+1} \rangle.
\end{align*}
Therefore, $\langle g,f\rangle =0$ for all $f\in \clm^\perp$ if and only if
\[
(T_{\bar{\vp}_1} g, \ldots, T_{\bar{\vp}_p} g, g) \in \Psi H^2_{\C^{p^\prime}}(\D),
\]
completing the proof of the theorem.
\end{proof}

To formulate a more convenient converse statement, we will now concentrate on Sarason-type perturbations of the forward shift (see \eqref{eqn: Sarason}). Recall the definition of Toeplitz operators (see \eqref{eqn: TO}). The well-known algebraic characterizations of Toeplitz operators state the following: Let $T \in \clb(H^2)$. Then $T$ is Toeplitz if and only if
\begin{equation}\label{B-H}
T_z^*T T_z = T.
\end{equation}

\begin{theorem}
Let $\clm \subseteq H^2$ be a closed subspace. Assume that $\{\vp_i\}_{i=1}^p \subseteq H^\infty$ is an orthonormal basis for $\cll$. If $\clm$ is invariant under $T_z - \sum_{i=1}^{m} T_z^* v_i \otimes v_i$, then there exist $p' \leq p+1$ and an inner function $\Psi \in H^\infty_{\clb(\C^{p'}, \C^{p+1})}$ such that
\[
\clm = \{g\in H^2 : (T_{\bar{\vp}_1} g, \ldots, T_{\bar{\vp}_p} g, g) \in \Psi H^2_{\C^{p^\prime}}\},
\]
and the operator $\mathcal{J} : \clk_\Psi\to \clm^\perp$ defined by
\[
\mathcal{J}(k_1, \ldots ,k_{p+1}) = \vp_1k_1 + \cdots + \vp_pk_p + k_{p+1},
\]
for all $(k_1, \ldots ,k_{p+1}) \in \clk_\Psi$ is unitary. Conversely, if $\clm$ admits the above representation along with the existence of the unitary $\clj$, then $\clm$ is invariant under $T_z - \sum_{i=1}^{p} \vp_i \otimes T_z^* \vp_i$.
\end{theorem}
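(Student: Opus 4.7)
The overall strategy is to dualize: $\clm$ is invariant under $T_z - \sum_{i=1}^{m} T_z^* v_i \otimes v_i$ if and only if $\clm^\perp$ is invariant under the adjoint $T_z^* - \sum_{i=1}^{m} v_i \otimes T_z^* v_i$. In this way the forward-shift perturbation problem is transformed into a backward-shift perturbation problem, for which the representation theory of Sections \ref{sec: pert of TO} and \ref{sec: pert of shift} applies directly.

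For the direct direction, I would apply Corollary \ref{cor: T_z per inv sub} to $\clm^\perp$ with the rank-$m$ perturbation $T_z^* - \sum v_i \otimes T_z^* v_i$, whose ``input'' vectors are $T_z^* v_1, \ldots, T_z^* v_m$. This produces a natural number $p' \leq p+1$ and an inner $\Psi \in H^\infty_{\clb(\C^{p'},\C^{p+1})}$ with
\[
\clm^\perp = [\vp_1, \ldots, \vp_p, 1]_{1 \times (p+1)} \clk_\Psi,
\]
where $\{\vp_i\}$ is the prescribed orthonormal basis of $\cll$. Passing to the orthogonal complement and exploiting the identity $\langle g, \vp_i k_i\rangle = \langle T_{\bar\vp_i} g, k_i\rangle$, which requires $\vp_i \in H^\infty$, one reads off
\[
g \in \clm \iff (T_{\bar\vp_1} g, \ldots, T_{\bar\vp_p} g, g) \perp \clk_\Psi \iff (T_{\bar\vp_1} g, \ldots, T_{\bar\vp_p} g, g) \in \Psi H^2_{\C^{p'}}.
\]
The unitarity of $\clj$ then follows from the norm identity $\|\sum \vp_i k_i + k_{p+1}\|^2 = \sum \|k_i\|^2 + \|k_{p+1}\|^2$ granted on $\clk_\Psi$ by Theorem \ref{Main_th_N} (which gives the isometric property), combined with surjectivity onto $\clm^\perp$ via the representation just obtained.

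For the converse, under the stated hypotheses, $\clm^\perp$ equals the range of the unitary $\clj$, namely $[\vp_1, \ldots, \vp_p, 1]_{1\times(p+1)} \clk_\Psi$, and the unitarity of $\clj$ supplies precisely the norm identity defining the set in \eqref{eqn: IS rep 2}. These are the ingredients required by the converse direction of Theorem \ref{Main-th} applied to $\clm^\perp$ (with $f_i = \vp_i$), which yields that $\clm^\perp$ is invariant under $T_z^* - \sum_{i=1}^p T_z^* \vp_i \otimes \vp_i$. Taking adjoints delivers invariance of $\clm$ under $T_z - \sum_{i=1}^p \vp_i \otimes T_z^* \vp_i$, as claimed.

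The main obstacles I anticipate are bookkeeping in nature. First, one must track which side of each rank-one summand corresponds to the ``$u_i$'' in Corollary \ref{cor: T_z per inv sub} after passing to the adjoint, and thereby identify the span produced there with $\cll$; a parallel issue is that $\{T_z^* v_i\}$ need not be orthonormal, so one implicitly rewrites the rank-$m$ perturbation in an orthonormal form before applying the corollary. Second, to legitimately invoke the converse of Theorem \ref{Main-th} one must verify that the set-theoretic description \eqref{eqn: IS rep 2} of $\clk_\Psi$ is honestly forced by the unitarity of $\clj$; this is delicate because the formula $(k_1,\ldots,k_{p+1}) \mapsto \sum \vp_i k_i + k_{p+1}$ extends to all of $H^2_{\C^{p+1}}$ and in general has a nontrivial kernel on $\Psi H^2_{\C^{p'}}$, so one should argue that the norm condition pins down the model-space representatives uniquely.
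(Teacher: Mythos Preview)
Your forward direction is exactly what the paper does: it simply defers to Theorem \ref{thm: IS of Tz pert} (whose proof is precisely the duality-plus-Corollary \ref{cor: T_z per inv sub} argument you describe), together with the unitary $\clj$ coming from the discussion at the start of Section \ref{sec: pert of shift}.

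For the converse the paper takes a different, more computational route. Rather than dualizing and citing the converse of Theorem \ref{Main-th}, the paper works directly on $\clm$: for $g\in\clm$ it writes $(T_z-\sum_i \vp_i\otimes T_z^*\vp_i)g = T_z g-\tilde g$ with $\tilde g=\sum_i\langle g,T_z^*\vp_i\rangle\vp_i$, and then checks membership in $\clm$ by showing $(T_{\bar\vp_1},\ldots,T_{\bar\vp_p},I)(T_zg-\tilde g)\in\Psi H^2_{\C^{p'}}$. The key algebraic ingredient is the Brown--Halmos identity $T_z^*T_{\bar\vp_j}T_z=T_{\bar\vp_j}$ (equation \eqref{B-H}), together with the explicit formula $\clj^*h=(T_{\bar\vp_1}h,\ldots,T_{\bar\vp_p}h,h)$ for $h\in\clm^\perp$; a short computation then collapses everything to $T_z(T_{\bar\vp_1}g,\ldots,T_{\bar\vp_p}g,g)$, which lies in $\Psi H^2_{\C^{p'}}$ since that space is $T_z$-invariant.

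Your route is conceptually cleaner and makes the duality with Theorem \ref{Main-th} transparent, but it comes at the cost of the verification of \eqref{eqn: IS rep 2} that you correctly flag as delicate. The paper's direct computation sidesteps this: it never needs the full set-theoretic description of $\clk_\Psi$, only the $T_z$-invariance of $\Psi H^2_{\C^{p'}}$ and the adjoint formula for $\clj$. (In fact, if you trace through the proof of the converse in Theorem \ref{Main-th}, the only consequences of \eqref{eqn: IS rep 2} actually used are that $e_i\in\clk_\Psi$ and that $\clk_\Psi$ is $(T_z^*\otimes I)$-invariant; so your approach can be repaired by establishing $e_i\in\clk_\Psi$ directly rather than the full \eqref{eqn: IS rep 2}.) Your bookkeeping concern about $\{T_z^*v_i\}$ not being orthonormal is valid but routine, and your observation that the relevant span after dualizing is $\text{span}\{P_{\clm^\perp}T_z^*v_i\}$ rather than $\cll$ itself is a genuine point to reconcile with the statement.
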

\begin{proof}
We only have to prove the reverse direction. Note that $T_{\vp_i}^* = T_{\bar{\vp}_i}$ for all $i=1, \ldots, p$, and hence
\[
\mathcal{J}^* h = (T_{\bar{\vp}_1} h, \ldots, T_{\bar{\vp}_p} h, h) \in \clk_\Psi,
\]
for all $h\in \clm^\perp$. Let $g\in \clm$. Then
\[
(T_z - \sum_{i=1}^{p} \vp_i \otimes T_z^* \vp_i)g = T_z g - \tilde{g}
\]
where $\tilde{g} = \sum_{i=1}^{p}\langle g, T_z^* \vp_i\rangle \vp_i$, and also $(T_{\bar{\vp}_1} g, \ldots, T_{\bar{\vp}_p} g, g) \in \Psi H^2_{\C^{p^\prime}}(\D)$. Hence
\begin{align*}
(T_{\bar{\vp}_1}, \ldots, T_{\bar{\vp}_p}, I) (T_z g - \tilde{g}) & = (T_{\bar{\vp}_1}, \ldots, T_{\bar{\vp}_p}, I) T_z g - (T_{\bar{\vp}_1}, \ldots, T_{\bar{\vp}_p}, I) \tilde{g}
\\
& = (T_{\bar{\vp}_1}T_z, \ldots, T_{\bar{\vp}_p}T_z, T_z) g - (T_{\bar{\vp}_1}, \ldots, T_{\bar{\vp}_p}, I) \tilde{g}
\\
& = (T_z T_z^* T_{\bar{\vp}_1}T_z, \ldots, T_z T_z^* T_{\bar{\vp}_p}T_z, T_z) g - (T_{\bar{\vp}_1}, \ldots, T_{\bar{\vp}_p}, I) \tilde{g}
\\
& \quad  + (P_{\C} T_{\bar{\vp}_1}T_z, \ldots, P_{\C} T_{\bar{\vp}_p}T_z, 0) g,
\end{align*}
as $T_z T_z^* + P_\C = I$, where $P_\C$ denotes the orthogonal projection onto the one-dimensional space of constant functions in $H^2$. Since $T_{\bar{\vp}_j}$ is a Toeplitz operator, $T_z^*T_{\bar{\vp}_p}T_z = T_{\bar{\vp}_p}$ for all $j=1, \ldots, p$ (see \eqref{B-H}), and consequently
\begin{align*}
(T_{\bar{\vp}_1}, \ldots, T_{\bar{\vp}_p}, I) (T_z g - \tilde{g}) & = T_z (T_{\bar{\vp}_1}, \ldots, T_{\bar{\vp}_p}, I) g - \mathcal{J}^* (\sum_{i=1}^{p}\langle g , T_z^* \vp_i\rangle \mathcal{J}e_i) + \sum_{j=1}^{p}\langle T_z g, \vp_j \rangle e_j
\\
& = T_z (T_{\bar{\vp}_1}, \ldots, T_{\bar{\vp}_p}, I) g - \sum_{i=1}^{p}\langle g , T_z^* \vp_i\rangle e_i + \sum_{j=1}^{p}\langle g, T_z^* \vp_j \rangle e_j
\\
& = T_z (T_{\bar{\vp}_1}, \ldots, T_{\bar{\vp}_p}, I) g
\\
& \in \Psi H^2_{\C^{p^\prime}}(\D),
\end{align*}
which implies that $\clm$ is invariant under the Sarason-type perturbation $T_z - \sum_{i=1}^{p} \vp_i \otimes T_z^* \vp_i$.
\end{proof}

Now let us reexamine the discussion that took place at the end of Section \ref{sec: pert of shift}. In particular, we continue the results described in Corollary \ref{cor: var T_z per inv sub}. Along this line, we note that the subspace $\clm$ above is also invariant under $T_z - \sum_{i=1}^p \vp_i \otimes T_z^* \vp_i$. To confirm this, we proceed as follows: Pick $f \in \clm^\perp$. By Corollary \ref{cor: var T_z per inv sub}, we have
\[
f = F_0 F + f_0,
\]
where $F = \sum_{i=0}^{\infty} A_i z^i$, $A_i \in \C^p$ for all $i$, and $f_0 = \sum_{i=0}^{\infty} g_{i+1}(0) z^i$. Then
\[
\begin{split}
(T_z^* - \sum_{i=1}^p T_z^* \vp_i \otimes \vp_i) f & = (T_z^* - \sum_{i=1}^p T_z^* \vp_i \otimes \vp_i) (F_0F + f_0)
\\
& = T_z^*(F_0F) + T_z^* f_0 - \sum_{i=1}^p \langle F_0F + f_0, \vp_i \rangle T_z^* \vp_i.
\end{split}
\]
Now assume $F = (k_0^1, \ldots, k_0^p)$, and proceed with the proof of Theorem \ref{Main-th}. We find
\[
\langle F_0 K_0 + f_0 , \vp_i\rangle = k_0^i(0),
\]
and finally
\[
(T_z^* - \sum_{i=1}^p T_z^* \vp_i \otimes \vp_i) f = F_0 (T_z^* F) + T_z^* f_0 \in \clm^\perp,
\]
proves that $\clm^\perp$ is invariant under the Sarason-type perturbation $T_z^* - \sum_{i=1}^p T_z^* \vp_i \otimes \vp_i$.

 

\section{Almost invariant subspaces}\label{sec: almost IS}

First, we recall the definition of almost invariant subspaces (see \cite{CGP}). Let $T \in \clb(\clh)$, and let $\clm \subseteq \clh$ be a closed subspace. Then $\clm$ is almost invariant for $T$ if there exists a finite-dimensional subspace $\clf\subseteq \clh$ such that
\[
T \clm \subseteq \clm \oplus \clf.
\]
Recall also that the defect of $\clm$ is the lowest possible dimension of such a space $\clf$. This section aims to link almost invariant subspaces of $T_z^*$ with the invariant subspaces of finite-rank perturbations for $T_z^*$. The link is quite natural and could even be considered a simple observation. However, this in particular gives a new insight into the theory of almost invariant subspaces and unifies the perturbation theory from an invariant subspace stance. We present the result in finer generality. We will assume $\clh$ is a Hilbert space, and, as usual, $\{u_i\}_{i=1}^m$ and $\{v_i\}_{i=1}^m$ are orthonormal sets in $\clh$.

\begin{theorem}\label{th-almost}
Let $T \in \clb(\clh)$ and let $\clm \subseteq \clh$ be a closed subspace. If $\clm$ is invariant under $T - \sum_{i=1}^{m} v_i \otimes u_i$, then $\clm$ is almost invariant under $T$ with defect at most $m$. Conversely, if $\clm $ is almost invariant under $T$ with defect $m$, then $\clm$ is invariant under $T - \sum_{i=1}^{m} f_i \otimes T^*f_i$, where $\{f_i\}_{i=1}^m$ is an orthonormal basis for the defect space.
\end{theorem}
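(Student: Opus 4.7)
The proof splits cleanly into two one-line observations once one fixes the convention that the defect space is taken inside $\clm^\perp$ (i.e. $T\clm \subseteq \clm \oplus \clf$ with $\clf \perp \clm$ of minimal dimension $m$). With this convention, both directions reduce to a straightforward projection identity.

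For the forward direction, the plan is to take an arbitrary $f \in \clm$ and split
\[
Tf \;=\; \bigl(T - \textstyle\sum_{i=1}^{m} v_i \otimes u_i\bigr) f \;+\; \sum_{i=1}^{m} \langle f, u_i\rangle v_i.
\]
By hypothesis the first summand lies in $\clm$, while the second lies in $\text{span}\{v_1,\ldots,v_m\}$. Writing each $v_i = P_{\clm} v_i + P_{\clm^\perp} v_i$ and absorbing the $\clm$-components into the first summand, one obtains $Tf \in \clm \oplus \clf$, where
\[
\clf := \text{span}\{P_{\clm^\perp} v_1, \ldots, P_{\clm^\perp} v_m\} \subseteq \clm^\perp.
\]
Since $\dim \clf \leq m$, the defect of $\clm$ is at most $m$. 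This direction requires no subtle input.

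For the reverse direction, let $\clf \subseteq \clm^\perp$ be the $m$-dimensional defect space with orthonormal basis $\{f_i\}_{i=1}^m$. The key observation is that $T\clm \subseteq \clm \oplus \clf$ together with $\clf \perp \clm$ forces
\[
P_\clf(Tg) \;=\; \sum_{i=1}^{m} \langle Tg, f_i\rangle f_i, \qquad P_{\clm}(Tg) \;=\; Tg - \sum_{i=1}^{m} \langle Tg, f_i\rangle f_i,
\]
for every $g \in \clm$. Using the adjoint identity $\langle Tg, f_i\rangle = \langle g, T^* f_i\rangle$, the computation
\[
\bigl(T - \textstyle\sum_{i=1}^{m} f_i \otimes T^* f_i\bigr) g \;=\; Tg - \sum_{i=1}^{m} \langle g, T^*f_i\rangle f_i \;=\; Tg - \sum_{i=1}^{m} \langle Tg, f_i\rangle f_i \;=\; P_\clm(Tg) \;\in\; \clm
\]
immediately yields the desired invariance.

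There is no substantive obstacle in this proof. The only point that requires care is the interpretation of the symbol $\oplus$ in the definition of almost invariance, which I would spell out at the start of the argument so that the orthogonality $\clf \perp \clm$ can be used freely in both directions. The rank of the perturbation in the reverse direction may collapse below $m$ whenever some $T^* f_i$ vanish, which is consistent with the $t \leq m$ clause in the introductory statement; the argument above goes through verbatim regardless.
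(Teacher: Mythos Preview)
Your proof is correct and follows essentially the same approach as the paper's. Both directions hinge on the same identities: in the forward direction, writing $Tf$ as the perturbed operator plus a finite-rank correction landing in $\text{span}\{v_i\}$; in the reverse direction, observing that $\sum_i \langle Tg, f_i\rangle f_i$ is exactly the $\clf$-component of $Tg$, so subtracting it leaves the $\clm$-component. Your additional step of projecting the $v_i$ onto $\clm^\perp$ in the forward direction is a harmless refinement that the paper omits (it simply takes $\clf \subseteq \text{span}\{v_i\}$), but the logic is identical.
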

\begin{proof}
Suppose $\clm \subseteq \clh$ is invariant under $T - \sum_{i=1}^{m} v_i \otimes u_i$, that is
\[
(T - \sum_{i=1}^{m} v_i \otimes u_i) \clm \subseteq \clm.
\]
Note that
\[
(v_i \otimes u_i)\clm = \{\langle f, u_i\rangle v_i: f \in \clm\} = \C v_i,
\]
for all $i=1, \ldots, m$. Therefore
\[
\begin{split}
T \clm & = \Big((T - \sum_{i=1}^{m} v_i \otimes u_i) + \sum_{i=1}^{m} v_i \otimes u_i\Big) \clm\\
& \subseteq \clm + \text{span}\{v_i: i=1, \ldots, m\},
\end{split}
\]
and hence, $\clm$ is almost invariant under $T$ with defect space $\clf$ such that
\[
\clf \subseteq \text{span}\{v_i: i=1, \ldots, m\}.
\]
This also says, in particular, that the $\dim \clf$ is at most $m$. For the converse direction, assume that $\clm $ is almost invariant under $T$ with defect $m$ and that $\clf$ is the $m$-dimensional defect space having an orthonormal basis, say $\{f_i:1\leq i\leq m\}$. Therefore, we have
\[
T \clm \subseteq \clm \oplus \clf.
\]
Fix $f \in \clm$. Since $Tf \in \clm \oplus \clf$, there exist $f_\clm \in \clm$ and $f_\clf \in \clf$ such that
\[
Tf = f_\clm \oplus f_\clf.
\]
Therefore, we have
\[
\begin{split}
(T - \sum_{i=1}^{m} f_i \otimes T^* f_i) f & = Tf - \sum_{i=1}^{m} \langle f, T^* f_i\rangle f_i
\\
& = Tf - \sum_{i=1}^{m} \langle f_\clm \oplus f_\clf, f_i\rangle f_i
\\
& = Tf - \sum_{i=1}^{m} \langle f_\clf, f_i\rangle f_i.
\end{split}
\]
Also note that
\[
f_\clf = \sum_{i=1}^{m} \langle f_\clf, f_i\rangle f_i,
\]
and consequently
\[
\begin{split}
(T - \sum_{i=1}^{m} f_i \otimes T^* f_i) f & = Tf - \sum_{i=1}^{m} \langle f_\clf, f_i\rangle f_i
\\
& = (f_\clm \oplus f_\clf) - f_\clf
\\
& = f_\clm
\\
& \in \clm.
\end{split}
\]
This proves that $\clm$ is invariant under $T - \sum_{i=1}^{m} f_i \otimes T^* f_i$.
\end{proof}

In particular, we have the following in the setting of $T^*_z$: Let $\clm$ be a closed subspace of $H^2$, and let $\{u_i\}_{i=1}^m$ and $\{v_i\}_{i=1}^m$ be orthonormal sets in $H^2$. If $\clm$ is invariant under $T_z^* - \sum_{i=1}^{m} v_i \otimes u_i$, then $\clm$ is almost invariant under $T_z^*$ with defect at most $m$. Conversely, if $\clm $ is almost invariant under $T_z^*$ with defect $m$, then $\clm$ is invariant under $T_z^* - \sum_{i=1}^{m} f_i \otimes T_z f_i$, where $\{f_i:1\leq i\leq m\}$ is an orthonormal basis of the defect space.

In Theorem \ref{Main-th}, we have already classified the invariant subspaces of finite-rank perturbations of $T_z^*$ of the above type. As a result, the above observation, along with Theorem \ref{Main-th}, easily implies the classification of almost invariant subspaces of $T_z^*$:

\begin{corollary}\label{al-crlre}
Let $\clm \subseteq H^2$ be a closed subspace. If $\clm$ is almost invariant under $T_z^*$ with defect $m$, then there exist a natural number $p' \leq p+1$, an inner function $\Psi \in H^\infty_{\clb(\C^{p'}, \C^{p+1})}$, and a unitary operator $\mathcal{J} : \clk_\Psi \to \clm$ such that
\begin{equation}\label{eqn: IS rep 1A}
\clm = [F_0,1]_{1\times (p+1)} \clk_\Psi,
\end{equation}
where $F_0 = [f_1, \ldots, f_p]$, $\{f_i\}_{i=1}^p$ is an orthonormal basis for $\text{span}\{P_\clm(T_z u_i)\}_{i=1}^m$, and $\{u_i\}_{i=1}^m$ is an orthonormal basis for the defect space, and
\begin{equation}\label{eqn: IS rep 3A}
\mathcal{J}(k_1, \ldots ,k_{p+1})= f_1k_1+ \cdots +f_pk_p +k_{p+1},
\end{equation}
for all $(k_1, \ldots ,k_{p+1}) \in \clk_\Psi$. Moreover
\begin{equation}\label{eqn: IS rep 2A}
\mathcal{K}_\Psi = \{(F, f_0)\in H^2_{\C^p} \oplus H^2 : F_0 F + f_0 \in \clm \textit{ and } \|{F}\|^2 +\|{f_0}\|^2 = \|F_0 F + f_0\|^2\}.
\end{equation}
Conversely, if $\clm$ has the representation \eqref{eqn: IS rep 1A} for some orthonormal set of vectors $\{f_i\}_{i=1}^p$ for $\clm$, an inner function $\Psi \in H^\infty_{\clb(\C^{p^\prime},\C^{p+1})}$ with $\clk_\Psi$ as in \eqref{eqn: IS rep 2A}, and a unitary $\clj$ as in \eqref{eqn: IS rep 3A}, then $\clm$ is almost invariant under $T_z^*$.
\end{corollary}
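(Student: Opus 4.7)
The plan is to obtain this corollary by sewing together two results already in hand: Theorem \ref{th-almost}, which translates almost invariance into invariance under a specific finite-rank perturbation of $T_z^*$, and Corollary \ref{cor: T_z per inv sub} (plus the converse of Theorem \ref{Main-th}), which classifies those invariant subspaces. No new analytic machinery should be needed; the bulk of the work is matching notation correctly.

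For the forward direction, let $\clf$ be the $m$-dimensional defect space of $\clm$ and let $\{u_i\}_{i=1}^m$ be an orthonormal basis of $\clf$. The converse half of Theorem \ref{th-almost} (applied with $T = T_z^*$, so $T^* = T_z$) immediately yields that $\clm$ is invariant under
\[
T_z^* - \sum_{i=1}^{m} u_i \otimes T_z u_i.
\]
This is of the form $T_z^* + \sum_{i=1}^m v_i \otimes \tilde u_i$ with $v_i = -u_i$ and $\tilde u_i = T_z u_i$, where $\{\tilde u_i\}_{i=1}^m$ is orthonormal because $T_z$ is an isometry. Corollary \ref{cor: T_z per inv sub} then supplies the natural number $p' \leq p+1$, the inner function $\Psi \in H^\infty_{\clb(\C^{p'},\C^{p+1})}$, and the orthonormal family $\{f_i\}_{i=1}^p$ forming a basis for $\text{span}\{P_\clm \tilde u_i : 1 \leq i \leq m\} = \text{span}\{P_\clm(T_z u_i) : 1 \leq i \leq m\}$, together with the representation $\clm = [F_0, 1]_{1\times(p+1)} \clk_\Psi$ and the description \eqref{eqn: IS rep 2A} of $\clk_\Psi$. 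The unitary $\clj : \clk_\Psi \to \clm$ defined by \eqref{eqn: IS rep 3A} is precisely the one identified in the opening discussion of Section \ref{sec: pert of shift}: surjectivity onto $\clm$ is built into the representation, and the isometric property is exactly the norm identity $\|F_0 F + f_0\|^2 = \|F\|^2 + \|f_0\|^2$ appearing in \eqref{eqn: IS rep 2A}.

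For the converse, suppose $\clm$ admits the representation \eqref{eqn: IS rep 1A} with $\clk_\Psi$ as in \eqref{eqn: IS rep 2A} and unitary $\clj$ as in \eqref{eqn: IS rep 3A}. These are exactly the hypotheses of the converse half of Theorem \ref{Main-th}, which concludes that $\clm$ is invariant under the Sarason-type rank-$p$ perturbation
\[
T_z^* - \sum_{i=1}^{p} T_z^* f_i \otimes f_i.
\]
Applying the forward half of Theorem \ref{th-almost} to this perturbation then produces a defect space for $\clm$ of dimension at most $p$, so $\clm$ is almost invariant under $T_z^*$, as desired.

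The only real obstacle is bookkeeping. One must be careful that, when invoking Corollary \ref{cor: T_z per inv sub}, the role of the vectors called $u_i$ in that corollary is played here by $T_z u_i$, so that the span $\text{span}\{P_\clm u_i\}$ in the corollary correctly translates into the span $\text{span}\{P_\clm(T_z u_i)\}$ of the corollary being proved. Once this identification is made, the description of the model space $\clk_\Psi$ and the definition of $\clj$ transfer verbatim, and both implications reduce to direct citations of the earlier theorems.
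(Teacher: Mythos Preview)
Your proposal is correct and follows essentially the same route as the paper: the paper simply remarks that the observation following Theorem \ref{th-almost} together with Theorem \ref{Main-th} ``easily implies'' the corollary, and you have spelled out exactly that derivation, including the necessary bookkeeping (identifying the $u_i$ of Corollary \ref{cor: T_z per inv sub} with $T_z u_i$ here). Nothing further is needed.
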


We remark that Chalendar, Gallardo-Guti\'{e}rrez, and Partington previously classified almost invariant subspaces of $T_z^*$ \cite[Theorem 3.4]{CGP}. However, the present result differs substantially. First, it connects with a topic that appears to have a different flavor, namely finite-rank perturbations of $T_z^*$; second, this comes as a byproduct of finite-rank perturbations of $T_z^*$. Third, the connection with the perturbation theory made the representations of almost invariant subspaces more explicit. For instance, the representation of the model space $\clk_\Psi$ is a new addition compared to the one derived in \cite[Theorem 3.4]{CGP}.

\section{Nearly invariant subspaces}\label{sec3}

The aim of this section is to establish a connection between our previous results on finite-rank perturbations and the concept of nearly invariant subspaces. At first, we set up the notations. Let $\{z_k\}_{k=1}^n$ be $n$-points (repetition is allowed) in $\D$. Assume that $z_j = 0$ for some $j=1, \ldots, n$. Define
\[
B_n(z)=\prod_{k=1}^n\dfrac{z_k -z}{1-\bar{z}_k z} \qquad (z \in \D),
\]
the Blaschke product corresponding to $\{z_k\}_{k=1}^n$. Clearly, $B_n \in H^\infty$ is an inner function, $B_n(0) = 0$, and
\[
\dim \clk_{B_n} = n.
\]
Throughout this section, we will fix the Blaschke product mentioned above. We recall the following from Definition \ref{def: nearly B}: A closed subspace $\clm \subseteq H^2$ is nearly $T^*_{z,B_n}$-invariant if $T_z^* f \in \clm$ for all
\[
f\in \clm \cap B_n H^2.
\]
If $n=1$, then $B_1(z) = z$, and hence, nearly $T^*_{z,B_1}$-invariant subspaces are precisely nearly $T_z^*$-invariant. For the basic preparation, we recall that $H^2$ is a reproducing kernel Hilbert space corresponding to the Szeg\"{o} kernel
\[
k(z,w) = \frac{1}{1-\bar{w}z},
\]
for all $z,w \in \D$. The \textit{kernel function} $k_w$ at $w \in \D$ is defined by
\[
k_w(z) = k(z,w),
\]
for all $z \in \D$. We also know that the set of kernel functions $\{k_w: w \in \D\}$ is total in $H^2$ and satisfy the \textit{reproducing property}
\[
f(w) = \langle f, k_w \rangle,
\]
for all $f \in H^2$ and $w \in \D$. The following general lemma concerns dimension estimations:

\begin{lemma}\label{dim}
Let $\clm$ be a closed subspace of $H^2$. Then
\[
\dim \left(\clm \ominus (\clm \cap B_n H^2)\right) \leq n.
\]
\end{lemma}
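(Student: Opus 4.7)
The plan is to exhibit an injective linear map from $\clm \ominus (\clm \cap B_n H^2)$ into the $n$-dimensional model space $\clk_{B_n}$, which immediately yields the dimension bound. The natural candidate is the orthogonal projection onto $\clk_{B_n}$ itself, since the orthogonal decomposition $H^2 = \clk_{B_n} \oplus B_n H^2$ already makes $B_n H^2$ the kernel of this projection.

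Concretely, I would let $P = P_{\clk_{B_n}}$ denote the orthogonal projection of $H^2$ onto $\clk_{B_n}$, and consider its restriction
\[
\tilde{P} := P\big|_{\clm \ominus (\clm \cap B_n H^2)} : \clm \ominus (\clm \cap B_n H^2) \longrightarrow \clk_{B_n}.
\]
To show $\tilde{P}$ is injective, I would take $f \in \clm \ominus (\clm \cap B_n H^2)$ with $\tilde{P}f = 0$. Since $\ker P = B_n H^2$, this forces $f \in B_n H^2$; together with $f \in \clm$, this gives $f \in \clm \cap B_n H^2$. But $f$ is by hypothesis orthogonal to $\clm \cap B_n H^2$, so $\langle f,f\rangle = 0$, i.e.\ $f = 0$.

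Combining injectivity with $\dim \clk_{B_n} = n$ yields
\[
\dim\bigl(\clm \ominus (\clm \cap B_n H^2)\bigr) \leq \dim \clk_{B_n} = n,
\]
which is exactly the claim. There is no real obstacle here beyond observing the correct orthogonal decomposition $H^2 = \clk_{B_n} \oplus B_n H^2$; once that is in hand the argument is a one-line kernel computation. The same scheme, incidentally, would adapt to any inner function $\Theta$ in place of $B_n$, replacing $n$ by $\dim \clk_\Theta$, but the present statement only needs the finite-Blaschke case.
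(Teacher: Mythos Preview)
Your argument is correct. It is essentially the dual of the paper's own proof: the paper fixes an orthonormal basis $\{f_1,\ldots,f_n\}$ of $\clk_{B_n}$, projects it into $\clm$ via $P_\clm$, and shows that $\text{span}\{P_\clm f_1,\ldots,P_\clm f_n\}$ is \emph{equal} to $\clm\ominus(\clm\cap B_nH^2)$, whence the latter is spanned by $n$ vectors. You instead project the other way, restricting $P_{\clk_{B_n}}$ to $\clm\ominus(\clm\cap B_nH^2)$ and checking injectivity. Both rest on the same orthogonal decomposition $H^2=\clk_{B_n}\oplus B_nH^2$ and are equally short; the paper's version has the mild bonus of giving an explicit spanning set for $\clm\ominus(\clm\cap B_nH^2)$, while yours is a touch cleaner if one only wants the dimension bound and, as you note, generalizes verbatim to any inner function in place of $B_n$.
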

\begin{proof}
Recall that $\dim \clk_{B_n} = n$. Suppose $\{f_1, \ldots, f_n\}$ is an orthonormal basis for $\clk_{B_n}$. Set
\[
\cll = span\{P_\clm f_1, \ldots, P_\clm f_n\}.
\]
Clearly, $\cll \subseteq \clm$. Now, for each $f \in \clm \cap B_n H^2$ and $j=1, \ldots, n$, we have
\[
\langle P_\clm f_j, f \rangle = \langle f_j, f \rangle = 0,
\]
as $f_j \perp B_n H^2$. This implies that $\cll \subseteq \clm \ominus (\clm \cap B_n H^2)$. We claim that this inclusion is part of equality. To see this, pick $f \in \clm$ and assume that $\langle f, P_\clm f_j \rangle = 0$ for all $j= 1, \ldots, n$. This is equivalent to say that  $\langle f, f_j \rangle = 0$ for all $j= 1, \ldots, n$, and hence $f \in B_n H^2$. Thus $f \in \clm \cap B_n H^2$, and so $\cll = \clm \ominus (\clm \cap B_n H^2)$, completing the proof of the lemma.
\end{proof}

We are now ready to prove the invariant subspace theorem. The proof follows the same structure as Theorem \ref{Main_th_N}. Hence, the subsequent proof will be concise and will exclude details that resemble the proof of Theorem \ref{Main_th_N}.

\begin{theorem}\label{B-inv-thm}
Let $\clm \subseteq \hdcc $ be a nontrivial closed subspace of $H^2$. Then $\clm$ is nearly $T^*_{z,B_n}$-invariant if and only if there exist natural numbers $r' \leq r\leq n$, an inner function $\Theta \in H^\infty_{\clb(\C^{r'},\C^r)}$ with $\Theta(0)=0$, and a unitary operator $\clj: \clk_\Theta \raro \clm$ such that
\[
\clm = [g_1,\ldots , g_r]_{1\times r} \clk_\Theta,
\]
where $\{g_i\}_{i=1}^r$ is an orthonormal basis for $\clm \ominus (\clm \cap B_n \hdcc)$, and
\[
\clj(k_1, \ldots ,k_r) = g_1k_1 + \cdots + g_r k_r,
\]
for all $(k_1, \ldots ,k_r) \in \clk_\Theta$.
\end{theorem}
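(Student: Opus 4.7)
I plan to follow the scheme of Theorem \ref{Main_th_N} closely, with near $T^*_{z,B_n}$-invariance replacing invariance under a finite-rank perturbation and the finite-dimensional space $\cle := \clm \ominus (\clm \cap B_n H^2)$ playing the role that $\clw$ played there. The forward direction carries the substantive content; the converse will be a short algebraic verification using the unitarity of $\clj$ together with the hypothesis $\Theta(0)=0$.

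For the forward direction, set $r := \dim \cle$, which is at most $n$ by Lemma \ref{dim}. Fix an orthonormal basis $\{g_i\}_{i=1}^r$ of $\cle$ and write $G_0 = [g_1, \ldots, g_r]$. Given $f \in \clm$, I decompose orthogonally within $\clm$ as $f = G_0 A_0 + h_1$, with $A_0 \in \C^r$ and $h_1 := P_{\clm \cap B_n H^2} f$. Since $B_n(0)=0$, we have $h_1 \in B_n H^2 \subseteq zH^2$, so $l_1 := T_z^* h_1 = h_1/z$ lies in $\clm$ by near invariance, and $h_1 = z l_1$ with $\|h_1\|=\|l_1\|$. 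Iterating the same decomposition on $l_1, l_2, \ldots$ produces sequences $(A_j) \subseteq \C^r$ and $(l_j) \subseteq \clm$ satisfying, for each $N$,
\[
f = G_0 \sum_{j=0}^{N} A_j z^j + z^{N+1} l_{N+1}, \qquad \|f\|^2 = \sum_{j=0}^{N} \|A_j\|^2 + \|l_{N+1}\|^2,
\]
so that $F(z) := \sum_{j \geq 0} A_j z^j$ defines an element of $H^2_{\C^r}$.

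The main obstacle is showing $\|l_N\| \to 0$, which then gives $f = G_0 F$ in $H^2$ and the norm identity $\|f\|^2 = \|F\|^2$. My intended approach is to rewrite $l_N = (T_z^* P_{\clm \cap B_n H^2})^N f$ and mimic the $C_{\cdot 0}$-argument from the proof of Theorem \ref{Main_th_N}, invoking \eqref{eqn: Benhida} with $T = T_z \in C_{\cdot 0}$ and exploiting the structure of $\clm \cap B_n H^2$ (which, inside $\clm$, has finite codimension $r$). Once this convergence is in hand, define $\clk := \{K \in H^2_{\C^r} : G_0 K \in \clm,\ \|G_0 K\| = \|K\|\}$ and $\clj : K \mapsto G_0 K$, which is then automatically a surjective isometry onto $\clm$. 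Applying the iterative decomposition to $l_1 = G_0(T_z^* F) \in \clm$ shows $\clk$ is invariant under $T_z^* \otimes I_{\C^r}$, so by the Beurling--Lax theorem $\clk = \clk_\Theta$ for some inner $\Theta \in H^\infty_{\clb(\C^{r'}, \C^r)}$ with $r' \leq r$. Since each standard basis vector $e_i \in \C^r$ satisfies $G_0 e_i = g_i \in \clm$ with $\|g_i\|=\|e_i\|=1$, the constants $\C^r$ lie in $\clk_\Theta$, which is equivalent to $\Theta(0)=0$.

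For the converse, suppose $\clm = G_0 \clk_\Theta$ with $\Theta(0)=0$ and $\clj$ unitary. Take $f \in \clm \cap B_n H^2$ and write $f = \clj K = G_0 K$ with $K \in \clk_\Theta$. Since $f \perp \cle$ and $\clj e_i = g_i$, the unitarity of $\clj$ together with $\langle K, e_i \rangle_{H^2_{\C^r}} = k_i(0)$ gives $k_i(0)=0$ for every $i$, i.e., $K(0)=0$. The Leibniz identity $T_z^*(g_i k_i) = g_i T_z^* k_i + k_i(0) T_z^* g_i$, summed over $i$, then yields
\[
T_z^* f = G_0(T_z^* K) + (T_z^* G_0) K(0) = G_0(T_z^* K),
\]
and since model spaces are $T_z^*$-invariant we have $T_z^* K \in \clk_\Theta$, hence $T_z^* f \in G_0 \clk_\Theta = \clm$. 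This establishes near $T^*_{z,B_n}$-invariance.
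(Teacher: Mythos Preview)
Your proposal is correct and follows the same scheme as the paper, which is actually terser: for the forward direction it simply points back to the iteration in Theorem~\ref{Main_th_N} and says ``conclude similarly'' without rewriting the $C_{\cdot 0}$ step, and its converse is your computation with the unitarity argument $\langle f,g_i\rangle=\langle \clj K,\clj e_i\rangle=k_i(0)$ left implicit. One small omission on your side: before fixing $r=\dim\cle$ you should verify $\cle\neq\{0\}$ (i.e.\ $r\geq 1$); the paper does this by noting that $\clm\subseteq B_nH^2$ together with $B_n(0)=0$ and repeated near-invariance would force every Taylor coefficient of each $f\in\clm$ to vanish, hence $\clm=\{0\}$.
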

\begin{proof}
Suppose $\clm$ is nearly $T^*_{z,B_n}$-invariant. We claim that
\[
\clm \nsubseteq B_n \hdcc.
\]
Indeed, if $\clm \subseteq B_n\hdcc$, then $\clm \subseteq z\hdcc$ as $B_n(0)=0$. So, for any
\[
f= \sum_{n=0}^{\infty}a_nz^n \in \clm,
\]
we have $a_0=0$. By the definition of nearly $T^*_{z,B_n}$-invariant, we have $T_z^* f \in \clm$, which gives $\sum_{t=1}^{\infty}a_t z^{t-1}\in \clm\subseteq z\hdcc$. But now $a_1 =0$, and hence, in a similar way, we conclude that $a_t =0$ for all $t\geq 0$. Then $f=0$ implying that $\clm =\{0\}$. This proves the claim, and consequently
\[
\clm \ominus (\clm \cap B_n \hdcc) \neq \{0\}.
\]
This and Lemma \ref{dim} then implies that
\[
r := \dim  (\clm \ominus (\clm \cap B_n \hdcc)) \in [1, n].
\]
Suppose $\{g_1, \ldots , g_r\}$ be an orthonormal basis for $\clm \ominus (\clm \cap B_n \hdcc)$. We decompose $\clm$ as
\[
\clm = \big(\clm \ominus (\clm \cap B_n \hdcc)\big) \oplus \big(\clm \cap B_n \hdcc \big).
\]
Fix $f \in \clm$, and write
\[
f= f_0 \oplus h_1
\]
for some $f_0 \in \clm \ominus (\clm \cap B_n \hdcc)$ and $h_1 \in \clm \cap B_n \hdcc$. Assume that $\{g_i\}_{i=1}^r$ is an orthonormal basis for $\clm \ominus (\clm \cap B_n \hdcc)$, and set $G_0 = [g_1, \ldots, g_r]_{1\times r}$. Then
\[
f_0 = a_{01}g_1 + \cdots +a_{0r}g_r = G_0A_0,
\]
where $A_0 = [a_{01}, \ldots, a_{0r} ]^t \in \C^r$. We also have that
\[
\|{f}\|^2 = \|{A_0}\|^2 +\|{h_1}\|^2 .
\]
Since $h_1 \in \clm \cap B_n \hdcc$, by the definition of nearly $T^*_{z,B_n}$-invariant subspace, $T_z^* h_1 \in \clm$, where, on the other had, $B_n (0)=0$ implies $h_1 = zf_1$ for some $f_1 \in \clm$. Then
\[
f= G_0A_0 \oplus zf_1,
\]
and $\|{f}\|^2 = \|{A_0}\|^2 +\|{f_1}\|^2$. We are now in exactly the same situation as in the proof of Theorem \ref{Main_th_N}, specifically the identity obtained in \eqref{eqn: f = FA+g}. Thus, we take the argument further and conclude similarly that
\[
f = G_0 K,
\]
with $\|{f}\|^2 = \|{K}\|^2$, where, $K = \sum_{t=0}^{\infty}A_t z^t \in H^2_{\C^r}$. With this in mind, define
\[
\clk: = \left\{K \in H^2_{\C^r}: G_0K \in \clm, \text{ and } \|G_0K\| = \|{K}\| \right\} \subseteq H^2_{\C^r}.
\]
Pick $K \in \clk$. Then there exists $f\in \clm$ such that $f = G_0K$. By the above construction, we know $f = G_0 A_0 \oplus zf_1$ with $A_0 = K(0)$. Then
\[
G_0\left(\frac{K(z)-K(0)}{z}\right) = f_1 \in \clm,
\]
implying $T_z^*K \in \clk$. Therefore by the Beurling-Lax theorem, there exist a natural number $r' \leq r$ and an inner function $\Theta \in H^\infty_{\clb(\C^{r^\prime},\C^r)}$ such that $\clk = \clk_\Theta$. In other words, $\clm = G_0 \clk_\Theta$. Our next aim is to show $\Theta(0) = 0$. Denote by $\{e_i\}_{i=1}^r$ the standard orthonormal basis for $\C^r$. For any $i=1, \ldots, r$, if we set $K = 1 \otimes e_i$, then $G_0 K = g_i$ together with $\|g_i\| = \|G_0K\| = \|e_i\| = 1$ implies $1 \otimes e_i \in \clk_\Theta$. This means
\[
1 \otimes \eta \in \clk_\Theta,
\]
for all $\eta \in \C^r$, and hence
\[
\Theta(0)^* \eta =  T_\Theta^*(1 \otimes \eta) = 0,
\]
for all $\eta \in \C^r$. Therefore, $\Theta(0) = 0$. The existence of unitary $\clj$ follows from the description of the space $\clk$.

\noindent For the converse direction, pick $f = G_0 K \in \clm$ for some $ K=(k_1, \ldots ,k_r) \in \clk_\Theta$. Assume that $f\in \clm \cap B_n \hdcc$. Since $\{g_i\}_{i=1}^r$ is an orthonormal basis for $\clm \ominus (\clm \cap B_n \hdcc)$, for $i \in \{1, \ldots , r\}$, we have
\[
\langle f, g_i\rangle = \langle g_1k_1 + \cdots + g_r k_r, g_i \rangle = 0,
\]
which implies $k_i(0) = 0$, and hence $K(0) = 0$. Now $f = G_0 K$ implies that $f(0) = 0$, and then from $T_z^* f = \frac{f - f(0)}{z}$, it follows that
\[
T_z^* f = \frac{G_0 K}{z} = G_0 \frac{K -K(0)}{z} = G_0 ((T_z^* \otimes I_{\C^r}) K) \in G_0 \clk_\Theta = \clm,
\]
from which we conclude that $\clm$ is nearly $T^*_{z,B_n}$-invariant.
\end{proof}

Considering the case $B_1(z)= z$, we can recover the structure of classical nearly $T_z^*$-invariant subspaces, which has been described by Hitt \cite{Hitt} and further modified by Sarason \cite{Sarason}. In this case, nearly $T_z^*$-invariant subspaces are of the form $gK_\vp$ where $\clk_\vp \subseteq H^2$ is a model space for some inner function $\vp \in H^\infty$ and $g$ is an isometric multiplier on $\clk_\vp$.

Lastly, we establish a connection between invariant subspaces of finite-rank perturbations of $T_z^*$ and nearly $T^*_{z,B_n}$-invariant subspaces. This is comparable to the relationship between invariant subspaces of finite-rank perturbations and almost invariant subspaces of $T_z^*$. The preceding theorem is the underlying basis for the result that follows.

\begin{corollary}\label{cor: pert and nearly}
Let $\clm$ be a closed subspace of $H^2$. If $\clm$ is invariant under $T_z^* - \sum_{i=1}^{n} T_z^* k_{z_i} \otimes k_{z_i}$, then $\clm$ is nearly $T^*_{z,B_n}$-invariant. Conversely, if $\clm$ is nearly $T^*_{z,B_n}$-invariant, then $\clm$ is invariant under $T_z^* - \sum_{i=1}^r T_z^* g_i \otimes g_i$, where $\{g_i\}_{i=1}^r$ is an orthonormal basis for $\clm \ominus (\clm \cap B_n \hdcc)$.
\end{corollary}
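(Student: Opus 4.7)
For the forward direction, the strategy is to exploit the reproducing property of $H^2$ against the zeros of $B_n$. If $f \in \clm \cap B_n H^2$, write $f = B_n h$ for some $h \in H^2$; since $B_n(z_i) = 0$ for every $i=1, \ldots, n$, the reproducing property gives
\[
\langle f, k_{z_i} \rangle = f(z_i) = B_n(z_i) h(z_i) = 0,
\]
for every $i$. Consequently the rank-$n$ perturbation acts as the plain backward shift on $f$:
\[
\Bigl(T_z^* - \sum_{i=1}^{n} T_z^* k_{z_i} \otimes k_{z_i}\Bigr) f = T_z^* f - \sum_{i=1}^{n} \langle f, k_{z_i} \rangle T_z^* k_{z_i} = T_z^* f.
\]
Since $\clm$ is invariant under the perturbed operator, $T_z^* f \in \clm$, which is exactly the defining condition of being nearly $T^*_{z,B_n}$-invariant.

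For the converse, the plan is to invoke the concrete representation furnished by Theorem \ref{B-inv-thm}: write $\clm = G_0 \clk_\Theta$ with $G_0 = [g_1, \ldots, g_r]_{1 \times r}$, where $\clk_\Theta \subseteq H^2_{\C^r}$ is $(T_z^* \otimes I_{\C^r})$-invariant (being a model space) and the map $\clj(K) = G_0 K$ defines a unitary $\clk_\Theta \to \clm$. As observed in the proof of Theorem \ref{B-inv-thm}, the constant vectors $\{1 \otimes e_i\}_{i=1}^r$ belong to $\clk_\Theta$ and $\clj(1 \otimes e_i) = g_i$. Given an arbitrary $f = G_0 K \in \clm$ with $K = (k_1, \ldots, k_r) \in \clk_\Theta$, the unitarity of $\clj$ then yields
\[
\langle f, g_i \rangle = \langle \clj K, \clj(1 \otimes e_i) \rangle = \langle K, 1 \otimes e_i \rangle = k_i(0),
\]
for each $i = 1, \ldots, r$. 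A direct computation using $G_0(K - K(0)) = z \, G_0 (T_z^* \otimes I_{\C^r}) K$ now gives
\[
\Bigl(T_z^* - \sum_{i=1}^{r} T_z^* g_i \otimes g_i\Bigr) f = T_z^* \Bigl( G_0 K - \sum_{i=1}^{r} k_i(0) g_i \Bigr) = T_z^* \bigl( G_0 (K - K(0)) \bigr) = G_0 (T_z^* \otimes I_{\C^r}) K.
\]
Since $\clk_\Theta$ is invariant under $T_z^* \otimes I_{\C^r}$, we have $(T_z^* \otimes I_{\C^r}) K \in \clk_\Theta$, and hence $G_0 (T_z^* \otimes I_{\C^r}) K \in G_0 \clk_\Theta = \clm$, as desired.

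Both directions are short, mirroring the pattern already established for almost invariant subspaces in Section \ref{sec: almost IS}. The only delicate point is the identification $\langle f, g_i \rangle = k_i(0)$, which does \emph{not} follow from naive orthogonality (since each $g_j k_j$ is an honest product in $H^2$ and the factors do not decouple) but rather from the unitarity of $\clj$ supplied by Theorem \ref{B-inv-thm}. No substantive obstacle is anticipated beyond keeping this identification straight.
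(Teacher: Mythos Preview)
Your proposal is correct and follows essentially the same approach as the paper. The forward direction is identical to the paper's argument (vanishing of $\langle f,k_{z_i}\rangle$ via the reproducing property, so the perturbation reduces to $T_z^*$ on $\clm\cap B_nH^2$); for the converse the paper simply says ``the remaining part of the proof follows the same line as the converse part of Theorem~\ref{Main-th}'', and what you have written is precisely that computation, adapted to the $G_0$-representation of Theorem~\ref{B-inv-thm} (with no $+f_0$ term), including the key use of the unitarity of $\clj$ to obtain $\langle f,g_i\rangle=k_i(0)$.
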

\begin{proof}
Suppose $\clm$ is invariant under $T_z^* - \sum_{i=1}^{n} T_z^* k_{z_i} \otimes k_{z_i}$. Let $f \in \clm \cap B_n H^2$. There exists $g \in H^2$ such that $f = B_ng \in \clm$. Then
\[
(T_z^* k_{z_i} \otimes k_{z_i}) f = \langle f, k_{z_i}\rangle T_z^* k_{z_i}  = f(z_i) T_z^* k_{z_i} = B_n(z_i) g(z_i) T_z^* k_{z_i} = 0,
\]
as $B_n(z_i) = 0$ for all $i=1, \ldots, n$. By assumption, we have $(T_z^* - \sum_{i=1}^{n} T_z^* k_{z_i} \otimes k_{z_i})f \in \clm$, and hence
\[
T_z^*f = (T_z^* - \sum_{i=1}^{n} T_z^* k_{z_i} \otimes k_{z_i}) f + (\sum_{i=1}^{n} T_z^* k_{z_i} \otimes k_{z_i})f =  (T_z^* - \sum_{i=1}^{n} T_z^* k_{z_i} \otimes k_{z_i}) f \in \clm,
\]
completes the proof of the fact that $\clm$ is nearly $T^*_{z,B_n}$-invariant. For the converse direction, assume that $\clm$ is nearly $T^*_{z,B_n}$-invariant. By Theorem \ref{B-inv-thm}, there exist natural numbers $r' \leq r\leq n$, an inner function $\Theta \in H^\infty_{\clb(\C^{r'},\C^r)}$ with $\Theta(0)=0$, and a unitary operator $\clj: \clk_\Theta \raro \clm$ such that
\[
\clm = [g_1,\ldots , g_r]_{1\times r} \clk_\Theta,
\]
where $\{g_i\}_{i=1}^r$ is an orthonormal basis for $\clm \ominus (\clm \cap B_n \hdcc)$, and
\[
\clj(k_1, \ldots ,k_r) = g_1k_1 + \cdots + g_r k_r,
\]
for all $(k_1, \ldots ,k_r) \in \clk_\Theta$. The remaining part of the proof follows the same line as the converse part of Theorem \ref{Main-th}.
\end{proof}

In a sense, the above result also yields examples of nearly $T^*_{z,B_n}$-invariant subspaces. We shall see more concrete examples in the following section. 

\section{Perturbed Toeplitz operators}\label{sec4}

The purpose of this section is to provide some examples of nearly $T^*_{z,B_n}$-invariant subspaces. From this perspective, we remind the reader that the kernels of Toeplitz operators are nearly $T_z^*$-invariant subspaces \cite{Sarason 1}. Along this line, we prove that the kernels of certain finite-rank perturbations of Toeplitz operators are nearly $T^*_{z,B_n}$-invariant. Consequently, in addition to the theory of finite-rank perturbations of $T_z^*$, the results of this section can be considered an additional justification for the new notion of nearly $T^*_{z,B_n}$-invariant subspaces.
 

We adopt the setting of Section \ref{sec3} and continue our discussion with nearly $T^*_{z,B_n}$-invariant subspaces. Therefore, $B_n$ is a fixed Blaschke product with $\{z_k\}_{k=1}^n$ as the zero set, and we assume that $B_n(0) = 0$. Recall that
\[
\dim \clk_{B_n} = n,
\]
where $\clk_{B_n} = H^2/B_n H^2$ is the model space. Given an arbitrary orthonormal basis $\{f_i\}_{i=1}^n$ of the model space $\clk_{B_n}$, we will be interested in the finite-rank perturbation of
\[
T_\vp + \sum_{i=1}^{n} f_i \otimes T_z^* f_i.
\]
We now prove that the kernel of this is nearly $T^*_{z,B_n}$-invariant subspace:

\begin{theorem}\label{thm: finite-rank pert TO}
Let $\vp \in L^\infty$. Then, for each orthonormal basis $\{f_i\}_{i=1}^n$ for the model space $\clk_{B_n}$, the kernel space
\[
\ker\Big(T_\vp + \sum_{i=1}^{n} f_i \otimes T_z^* f_i\Big),
\]
is nearly $T^*_{z,B_n}$-invariant.
\end{theorem}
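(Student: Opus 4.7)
The plan is to show that if $h$ lies in both $\ker X$ and $B_n H^2$, where $X := T_\vp + \sum_{i=1}^n f_i \otimes T_z^* f_i$, then $T_z^* h$ is again in $\ker X$. I would first recast the finite-rank part of $X$ as an orthogonal projection. Since $\{f_i\}_{i=1}^n$ is an orthonormal basis for $\clk_{B_n}$, for any $g\in H^2$,
\[
\sum_{i=1}^n \langle g, T_z^* f_i\rangle f_i \;=\; \sum_{i=1}^n \langle T_z g, f_i\rangle f_i \;=\; P_{\clk_{B_n}}(T_z g),
\]
so $Xg = T_\vp g + P_{\clk_{B_n}}(T_z g)$.

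Next, pick $h \in \ker X \cap B_n H^2$. Since $B_n H^2$ is $T_z$-invariant, $T_z h \in B_n H^2$, which is orthogonal to $\clk_{B_n}$. Hence $P_{\clk_{B_n}}(T_z h)=0$, and the equation $Xh=0$ collapses to the single identity
\[
T_\vp h = 0.
\]

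Now I would compute $X(T_z^* h)$ piece by piece. For the projection part, $T_z T_z^* = I - P_{\C}$ (projection onto constants), and since $B_n(0)=0$ forces $h(0)=0$, we get $T_z T_z^* h = h$; then $P_{\clk_{B_n}}(h)=0$ because $h \in B_n H^2$. For the Toeplitz part, I would use the commutation $T_\vp T_z^* = T_z^* T_\vp$ valid on $z H^2$ for every $\vp \in L^\infty$. This comes out of the Brown--Halmos identity \eqref{B-H}: right-multiplying $T_z^* T_\vp T_z = T_\vp$ by $T_z^*$ and using $T_z T_z^* = I - P_{\C}$ gives $T_z^* T_\vp (I - P_{\C}) = T_\vp T_z^*$, and $P_{\C}$ vanishes on $zH^2$. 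Since $h \in B_n H^2 \subseteq zH^2$ and $T_\vp h = 0$, this yields $T_\vp T_z^* h = T_z^* T_\vp h = 0$. Combining the two, $X(T_z^* h) = 0$, which is exactly $T_z^* h \in \ker X$.

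There is no serious obstacle here: the argument rests on two observations (the rewriting of the rank-$n$ correction as $P_{\clk_{B_n}}T_z$, and the commutation of $T_\vp$ with $T_z^*$ on $zH^2$), both of which are essentially formal. The only point deserving explicit care is the commutation identity for a general $L^\infty$-symbol $\vp$, since off of $zH^2$ a boundary term proportional to the evaluation at $0$ appears; but this term drops out automatically because $h(0)=0$ for any $h \in B_n H^2$.
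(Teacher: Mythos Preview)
Your proof is correct and follows essentially the same route as the paper's: both arguments first use $T_z h \in B_n H^2 \perp \clk_{B_n}$ to reduce $Xh=0$ to $T_\vp h=0$, then handle $X(T_z^*h)$ via $T_zT_z^*h=h$ together with the Brown--Halmos identity $T_z^*T_\vp T_z=T_\vp$. Your packaging of the finite-rank part as $P_{\clk_{B_n}}T_z$ is a clean cosmetic improvement, but the underlying logic is identical.
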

\begin{proof}
Pick an arbitrary $f \in \ker(T_\vp + \sum_{i=1}^{n} f_i \otimes T_z^* f_i) \cap B_nH^2$. Our goal is to prove that $T_z^* f \in \ker(T_\vp + \sum_{i=1}^{n} f_i \otimes T_z^* f_i)$. Since
\[
(T_\vp + \sum_{i=1}^{n} f_i \otimes T_z^* f_i)f = 0,
\]
it follows that
\[
T_\vp f + \sum_{i=1}^{n} \langle T_z f, f_i \rangle f_i = 0.
\]
Since $f \in B_nH^2$, we have $T_z f \in B_nH^2$. Also, $\{f_i\}_{i=1}^n$ is an orthonormal basis for $\clk_{B_n}$. This implies
\[
\sum_{i=1}^{n} \langle T_z f, f_i \rangle f_i = 0,
\]
and then $T_\vp f = 0$. Next, we recall that $T_z T_z^* = I - P_{\C}$, where $P_{\C}$ denotes the orthogonal projection of $H^2$ onto the one-dimensional space of all constant functions. In the present scenario, $f$ is in $B_nH^2$ and $B_n(0) = 0$, which implies $f(0) = 0$. Therefore, $P_{\C} f = 0$ and hence
\[
T_z T_z^* f = f.
\]
We use this and the Toeplitz operator identity $T_z^*T_\vp T_z = T_\vp $ (see \eqref{B-H}) to compute
\begin{align*}
(T_\vp + \sum_{i=1}^{n} f_i \otimes T_z^* f_i) T_z^*f & = T_\vp (T_z^* f) + \sum_{i=1}^{n}\langle T_z^*f, T_z^* f_i \rangle f_i
\\
& = T_z^*T_\vp T_z (T_z^*f) + \sum_{i=1}^{n}\langle T_z T_z^*f , f_i \rangle f_i
\\
& = T_z^* T_\vp f + \sum_{i=1}^{n}\langle f , f_i \rangle f_i.
\end{align*}
Since $f \in B_n H^2$ and $\{f_i\}_{i=1}^n$ is an orthonormal basis for $\clk_{B_n}$, it is clear that
\[
\sum_{i=1}^{n}\langle f , f_i \rangle f_i = 0.
\]
Then $T_\vp f = 0$ implies that
\[
T_z^*f \in \ker (T_\vp + \sum_{i=1}^{n} f_i \otimes T_z^* f_i),
\]
and proves the claim that $\ker(T_\vp + \sum_{i=1}^{n} f_i \otimes T_z^* f_i)$ is nearly $T^*_{z,B_n}$-invariant.
\end{proof}

Evidently, the case $B_1(z)= z$ yields the known fact that the kernel of a Toeplitz operator is nearly $T_z^*$-invariant. The following observation is a comparison between nearly $T_z^*$-invariant subspaces and nearly $T^*_{z,B_n}$-invariant subspaces:

\begin{proposition}
Every nearly $T_z^*$-invariant subspace is a nearly $T^*_{z,B_n}$-invariant subspace.
\end{proposition}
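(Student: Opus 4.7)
The plan is to exploit the fact that $B_n$ vanishes at the origin. Since $B_n(0) = 0$, we may write $B_n = z h$ with $h \in H^\infty$, so that $B_n H^2 \subseteq z H^2$. The two definitions then line up immediately: nearly $T_z^*$-invariance is the stronger hypothesis because it requires $T_z^*$ to map a larger intersection (namely $\clm \cap z H^2$) back into $\clm$, whereas nearly $T^*_{z,B_n}$-invariance asks the same thing on the smaller set $\clm \cap B_n H^2$.

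Concretely, I would proceed as follows. First, suppose $\clm$ is nearly $T_z^*$-invariant. Fix an arbitrary $f \in \clm \cap B_n H^2$. Since $B_n H^2 \subseteq z H^2$, we have $f \in \clm \cap z H^2$. By the hypothesis on $\clm$, it follows that $T_z^* f \in \clm$. As $f$ was arbitrary in $\clm \cap B_n H^2$, this is exactly the defining property of nearly $T^*_{z,B_n}$-invariance.

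There is essentially no obstacle here; the content of the proposition is the trivial set-theoretic inclusion $B_n H^2 \subseteq z H^2$ together with the observation that the nearly-invariance conditions are of the form ``for all $f$ in some subset $S$ of $\clm$, $T_z^* f \in \clm$,'' which is monotone in $S$. The only point that one should record explicitly is why $B_n H^2 \subseteq z H^2$: the hypothesis $B_n(0)=0$ makes $B_n/z$ an element of $H^\infty$ (in fact, a Blaschke product of degree $n-1$ times a unimodular constant), so any $B_n g$ with $g \in H^2$ equals $z \cdot ((B_n/z) g) \in z H^2$.
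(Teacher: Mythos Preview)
Your proof is correct and essentially identical to the paper's own argument: both use $B_n(0)=0$ to get $B_n H^2 \subseteq z H^2$, hence $\clm \cap B_n H^2 \subseteq \clm \cap z H^2$, and then invoke nearly $T_z^*$-invariance directly. Your extra remark that $B_n/z \in H^\infty$ is a clean way to justify the inclusion, but otherwise there is nothing to add.
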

\begin{proof}
Let $\clm$ be a nearly $T_z^*$-invariant subspace of $H^2$. Since $B_n(0) = 0$, it follows that $B_nH^2 \subseteq z H^2$, and hence
\[
\clm \cap B_n H^2 \subseteq \clm \cap zH^2.
\]
Therefore, if $f\in \clm \cap B_nH^2$, then $f\in \clm \cap zH^2$, and consequently $T_z^* f \in \clm$. This proves that $\clm$ is $T^*_{z,B_n}$-invariant.
\end{proof}

The converse of the above observation is, of course, not true. For instance, consider $B_n$ as $B_n = z B_{n-1}$, where $B_{n-1}(0)=0$. We consider the $3$-dimensional subspace $\clm$ of $H^2$, where
\[
\clm = span\{B_{n-1}, B_n, zB_n\}.
\]
It is simple to see that $\clm$ is nearly $T^*_{z,B_n}$-invariant but neither $T_z^*$-invariant nor nearly $T_z^*$-invariant.

In terms of more examples of nearly $T^*_{z,B_n}$-invariant subspaces, we note that a class of Schmidt subspaces of Hankel operators are nearly $T_z^*$-invariant subspaces \cite{GaPu21, GaPu20}. Given the above illustrations, one would naturally predict that the same would apply to finite-rank perturbations of Hankel operators or perturbations of operators that are related to Hankel operators. It would certainly be fascinating to identify such class operators.

In closing, we remark that this paper effectively connects invariant subspaces of three types of operators: finite-rank perturbations of $T_z^*$, the Blaschke-based backward shift $T_{z, B_n}^*$, and almost invariant subspaces of $T_z^*$. We have enhanced the nearly invariant subspace techniques of Hayashi \cite{Hayashi}, Hitt \cite{Hitt}, and Sarason \cite{Sarason}. This enhancement allowed us to tackle the fundamentally challenging invariant subspace problem for perturbed backward shift operators. One should naturally expect further developments along these lines.

\vspace{0.1in}

\noindent\textbf{Acknowledgement:}
The research of the first named author is supported by the Theoretical Statistics and Mathematics Unit, Indian Statistical Institute, Bangalore, India. The research of the second named author is supported in part by TARE (TAR/2022/000063) by SERB, Department of Science \& Technology (DST), Government of India.

\end{document}